\theoremstyle{plain}
\newtheorem{theorem}{Theorem}[section]
\theoremstyle{remark}
\theoremstyle{plain}
\newtheorem{proposition}[theorem]{Proposition}
\numberwithin{equation}{section}
\def\N{{\mathbb N}}
\def\R{{\mathbb R}}
\def\C{{\mathbb C}}
\newcommand{\ds}{\displaystyle}
\newcommand\numberthis{\addtocounter{equation}{1}\tag{\theequation}}
\begin{document}
\author{E. M. Ait Ben Hassi}
\address{E. M. Ait Ben hassi, Cadi Ayyad University, Faculty of Sciences Semlalia, 2390, Marrakesh, Morocco}
\email{m.benhassi@uca.ma}	

\author{M.  Fadili}
\address{M. Fadili, Cadi Ayyad University, Faculty of Sciences Semlalia, 2390, Marrakesh, Morocco}
\email{fadilimed@live.fr}

\author{L.  Maniar}
\address{L. maniar, Cadi Ayyad University, Faculty of Sciences Semlalia, 2390, Marrakesh, Morocco}
\email{maniar@uca.ma}

\title[]{ Controllability of a system of degenerate parabolic equations with non-diagonalizable 
	diffusion matrix}

\keywords{Parabolic degenerate  systems, Carleman estimate, null controllability, non-diagonalizable}
\subjclass[2010]{35K20, 35K65, 47D06, 93B05, 93B07}


\begin{abstract}
	In this paper we  study the null controllability  of some non diagonalizable degenerate parabolic systems of PDEs, we assume that the diffusion, coupling and controls matrices are constant and we characterize the null controllability by an algebraic condition so called   \textit{Kalman's rank} condition. 
\end{abstract}

\maketitle

	
\section{Introduction and Main result }

In this paper we focus on the controllability properties of some non-diagonalizable parabolic degenerate systems.
\begin{equation}\label{syst24}
\left\lbrace \begin{array}{lll}
\partial_t Y=(\mathbf{D}\mathcal{M}   +A) Y + B v \mathbbm{1}_{\omega}& in & Q, \\
\mathbf{C} Y=0   & on & \Sigma, \\
Y(0,x)=Y_0(x)\,\,  & in & (0,1),
\end{array}
\right.
\end{equation}
where $Q:=(0,T)\times(0,1)$, $\Sigma~:~=~(0,T)~\times~\{0,1\}$, for $T>0$ and $\omega\subset (0,1)$ a is a (small) nonempty open control region,  $ \mathbbm{1}_{\omega}$ denotes the characteristic function of $\omega$. The diffusion matrix $\mathbf{D}$ is a non-diagonalizable $n\times n$ matrix that satisfies the following assumptions :
\begin{itemize}
	\item there exists $\alpha_0>0$ such that \begin{equation}\label{cond1}
	  \mathbf{D}\xi. \xi\geqslant \alpha_0|\xi|^2\quad \forall \xi \in \R^n
	\end{equation}
	\item there exists a non-singular matrix \begin{equation}\label{h1} P\in \mathcal{L}(\C^n)\text{  such that } \mathbf{D}=PJP^{-1}\end{equation} for some $J\in \mathcal{L}(\C^n)$ of the form
	\begin{equation*}\label{h2} J=\mathrm{ diag} (J_1,\cdots,J_p),\end{equation*}
	where the $J_i$ are the Jordan blocks associated to the eigenvalues  $d_i$ of $\mathbf{D}$.
	\begin{equation}\label{jordan:block}
	J_i= \begin{bmatrix} 
	d_i & 1 & & \\
	& \ddots & \ddots & \\
	& & \ddots& 1\\
	& & & d_i 
	\end{bmatrix}
	\end{equation}
	with $\mathrm{R}e\, d_i>0$.\\
\end{itemize}

 The coupling matrix $A$ is a $n\times n$ constant matrix and the control matrix $B$ is a $n\times m$ constant matrix. The operator $\mathcal{M}$ is defined by  $  \mathcal{M} y= {\left(  a y_x \right)}_x$ for $y\in D(\mathcal{M} )\subset L^2(0,1)$.  For  $Y=(y_1,\cdots,y_n)^*$,  $\mathcal{M}Y$ denotes $(\mathcal{M}y_1,\cdots,\mathcal{M}y_n)^*$. 
The function  $a$ is a diffusion coefficient which degenerates at $0$ (i.e., $a(0)=0$)  and which can be either  weak  degenerate (WD), i.e.,
\begin{equation}
\text{(WD)}\,\,	
\begin{cases}
(i)\,\,a \in \mathcal{C}([0,1])\cup\mathcal{C}^1((0,1]),\, a>0 \text{ in }(0,1],\,a(0)=0,\\
(ii)\,\,\exists K\in[0,1)\text{ such that }xa'(x)\leqslant Ka(x), \;\,\forall x\in[0,1],
\end{cases}
\end{equation}
or  strong degenerate (SD), i.e.,
\begin{equation}
\text{(SD)}\,\,
\begin{cases}
(i)\,\,a \in \mathcal{C}^1([0,1]),\, a>0 \text{ in }(0,1],\,a(0)=0,\\
(ii)\,\,\exists K\in[1,2)\text{ such that }xa'(x)\leqslant Ka(x)\,\forall x\in[0,1], \\
(iii)\begin{cases}
\ds\exists\theta\in(1,K] x\mapsto\frac{a(x)}{x^{\theta}}\text{ is nondecreasing near }0, \text{ if }K>1,\\
\ds\exists\theta\in(0,1) x\mapsto\frac{a(x)}{x^{\theta}}\text{ is nondecreasing near }0, \text{ if }K=1.
\end{cases}
\end{cases}
\end{equation}
The boundary  condition  $\mathbf{C}Y=0$ is either  $Y(0)=Y(1)=0$ in the weak degenerate  case $(WD)$ or
$Y(1)=(aY_x)(0)=0$ in the strongly degenerate case $(SD)$.

It will be said that \eqref{syst24} {\it  is null-controllable } at time $T$ if, for any $Y_0 \in L^2((0,1))^n$,  there exists $v \in L^2(\omega\times(0,T))^m$ such that the associated solution satisfies
\begin{equation*}
y(T,x) = 0 \text{ in } (0,1).	 
\end{equation*}

The system \eqref{syst24}  is said to be {\it approximately controllable } at time $T$ if, for any $Y_0, Y_1 \in L^2((0,1))^n$ and $\varepsilon>0$, there exists $v \in L^2(\omega\times(0,T))^m$  such that the solution of \eqref{syst24}  satisfies
\begin{equation*}
\|y(T,x)-Y_1\|_{L^2(0,1)^n} \leqslant \varepsilon. 	 
\end{equation*}

Controlling coupling systems of partial differential equations attracted growing interest during the last decade, the main question is whether it is possible to control such systems with fewer controls (i.e the number of controls is less than the number of equations). For finite dimensional linear systems, the controllability can be characterized by algebraic rank
condition on the matrices generating the dynamics and taking account of the control action.  The
theory has been adapted and extended to more general systems including infinite dimensional
systems. At our knowledge, in the nondegenerate case,  M. Gonzalez-Burgos, L. de Teresa  \cite{GonTer}  provided a null controllability
result for a cascade parabolic system by one control force under a condition on the sub-diagonal of the coupling matrix.   F. Ammar-Khodja et al. \cite{A2,A3}  obtained several results characterizing the null controllability  of fully coupled systems with $m$-control forces by a generalized Kalman rank condition. In \cite{FerGonTer}, the authors gave controllability results for a system in the case where the diffusion matrix is non diagonalizable.  

For degenerate systems,  the case  of two coupled equations ($n=2$),  cascade  systems  are considered  in \cite{CaMaVa2,de-Ca} and  in \cite{bahm, hjjaj}  the authors have studied the null controllability of degenerate noncascade  parabolic systems. 

In  \cite{FadiliManiar}, we have extended the null controllability results obtained by Ammar-Khodja et al. \cite{A3} to a class of parabolic degenerate systems of PDEs in the two following cases 
\begin{enumerate}
	\item the coupling matrix $A$ is a cascade one and the diffusion matrix \\ $\mathbf{D}=diag(d_1,\cdots,d_n)$ where $d_i>0$, $i=1,\cdots,n$,
	\item the coupling matrix $A$ is a full matrix (noncascade) and the diffusion matrix $\mathbf{D}=d I_n, \; d>0$.
\end{enumerate}
On the other hand, in \cite{benfama} we study the null controllability of \eqref{syst24} in the case where de diffusion matrix $\mathbf{D}$ is diagonalizable $n\times n$ matrix with positive real eigenvalues, i.e.,
\begin{equation}\label{D:diagonalizable}
\mathbf{D}=P^{-1} \mathbf{J} P,\,\, P\in \mathcal{L}(\R^n),\,\, det(P)\neq 0,
\end{equation}
where  $\mathbf{J}=diag(d_1,\cdots,d_n)$, $d_i>0, 1\leqslant i\leqslant n$

In the current  paper, we assume that diffusion matrix $\mathbf{D}$ is non-diagonalizable. We use the same approach as \cite{FerGonTer} without imposing that Jordan's block sizes are bounded by  4. Thus our proof is also an improvement of the one given in \cite{FerGonTer} for the nondegenerate case.

Notice that, if \eqref{cond1} is satisfied, for every $v\in L^2(\omega\times(0,T); \R^m)$ and every $y_0 \in L^2((0,1);\R^n)$, \eqref{syst24} possesses a unique weak solution $y$, with
$y\in C\left([0,T],\left(L^2(0,1)\right)^n\right)\cap L^2\left(0,T;{\left( H_{a}^1\right)}^n \right)$
see Section 2.

In order to study the null controllability of system \eqref{syst24},  we will
consider the following corresponding adjoint problem
\begin{equation}\label{adjsyst24}
\left\lbrace \begin{array}{lll}
-z_t-\mathbf{D}^*\mathcal{M}z =  A^* z & in & Q, \\
\mathbf{C} z=0   & on & (0,T), \\
z(T,x)=z_T(x)\,\,  & in & (0,1).
\end{array}
\right.
\end{equation}

Since  the null controllability of system \eqref{syst24} is equivalent to the existence of a positive constant $C$ such that, for every $z_0\in L^2(0,1)^n$, the solution $z\in\mathcal{C}^{0}([0,T];L^2(0,1)^n)$  to the adjoint system \eqref{adjsyst24} satisfies the observability inequality :
\begin{equation}\label{observ:inequality}
	\|z(\cdot,0) \|^2_{L^2(0,1)^n}\leqslant C \int\!\!\!\!\!\int_{\omega\times(0,T)} |B^*z(x,t)|^2,
\end{equation}

The strategy used in this case is slightly different from the one used in \cite{benfama}, although in both cases it is necessary to show Carleman estimates for a scalar PDE of order 2n in space.

Note that this technique  failed in the case where $\mathcal{M}$ is a compact operator, since its spectrum admits zero as a point of accumulation, so the Kalman condition is no longer verified, which does not ensure a perfect coupling of the equations.

All along the article, we use generic constants for the estimates, whose values may change from
	line to line.

Let us remark that when $A\in\mathcal{L}(\R^n)$ and $B\in\mathcal{L}(\R^m,\R^n)$ are constant matrices, $[A|B] \in\mathcal{L}(\R^{nm},\R^n)$ is the matrix given by
\begin{equation*}
[A|B] = \big(A^{n-1}B|A^{n-2}B|\cdots|AB|B\big) 
\end{equation*}
With this notation, we have the following main result.
 
	\begin{theorem}\label{main result}
		Let assume that $\mathbf{D},\, A \in \mathcal{L}(\R^n)$, $B\in\mathcal{L}(\R^m;\R^n)$  such that $\mathbf{D}$ satisfies \eqref{cond1}-\eqref{jordan:block}. Then \eqref{syst24} is null controllable at time $T$ if and only if 
		\begin{equation}\label{condiequivalente}
		\mathrm{rank} [\lambda_i\mathbf{D}-A|B]=n\:\:\forall i\geqslant 1.	
		\end{equation}
	\end{theorem}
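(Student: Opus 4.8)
The plan is to exploit the equivalence, recalled just before the statement, between null controllability of \eqref{syst24} and the observability inequality \eqref{observ:inequality} for the adjoint system \eqref{adjsyst24}: I would prove that \eqref{condiequivalente} holds if and only if \eqref{observ:inequality} is satisfied. Let $(-\lambda_i,\phi_i)_{i\geqslant1}$ denote the eigenelements of the self-adjoint negative operator $\mathcal{M}$ under the boundary conditions $\mathbf{C}\cdot=0$; since $\mathcal{M}$ has compact resolvent in the weighted energy space, one has $0<\lambda_1<\lambda_2<\cdots\to\infty$ and $(\phi_i)_{i\geqslant1}$ is an orthonormal basis of $L^2(0,1)$, so the scalars $\lambda_i$ in \eqref{condiequivalente} are exactly these eigenvalues. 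I also note that \eqref{cond1} forces $\mathbf{D}$ to be invertible, a fact I use repeatedly.

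For the necessity of \eqref{condiequivalente} I would reason by contradiction with separated-variables solutions. Suppose $\mathrm{rank}[\lambda_{i_0}\mathbf{D}-A\,|\,B]<n$ for some $i_0$. Then $(\lambda_{i_0}\mathbf{D}-A,B)$ is not controllable, hence the dual pair $(\lambda_{i_0}\mathbf{D}^*-A^*,B^*)$ is not observable, so there is $v\in\C^n\setminus\{0\}$ with $B^*(\lambda_{i_0}\mathbf{D}^*-A^*)^k v=0$ for all $k\geqslant0$. Choosing $z_T=v\,\phi_{i_0}$, the solution of \eqref{adjsyst24} is $z(t,x)=e^{(\lambda_{i_0}\mathbf{D}^*-A^*)(t-T)}v\,\phi_{i_0}(x)$, so that $B^*z(t,x)=\big(B^*e^{(\lambda_{i_0}\mathbf{D}^*-A^*)(t-T)}v\big)\phi_{i_0}(x)\equiv0$ on $\omega\times(0,T)$ while $z\not\equiv0$. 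This contradicts \eqref{observ:inequality} (indeed it already contradicts approximate controllability), proving that \eqref{condiequivalente} is necessary.

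For the sufficiency I would first recast \eqref{adjsyst24} in Jordan form through the change of unknown attached to \eqref{h1}, which replaces $\mathbf{D}^*\mathcal{M}$ by $J^*\mathcal{M}$ and splits the system along the blocks \eqref{jordan:block}; inside a block of size $k$ with eigenvalue $d$ the coupling reduces to a cascade acting through the single scalar operator $\mathcal{M}$. Setting $p(\mu,s):=\det(\mu I-s\,\mathbf{D}^*-A^*)$, a polynomial of degree $n$ in $\mu$ and of degree $n$ in $s$ (its $s^n$-coefficient being $\pm\det(\mathbf{D}^*)\neq0$), the Cayley--Hamilton theorem over the commutative ring $\C[\mathcal{M}]$, together with $\partial_t^{\,j}z=(-1)^j(\mathbf{D}^*\mathcal{M}+A^*)^jz$ along solutions, shows that each component satisfies the scalar equation $p(-\partial_t,\mathcal{M})z_j=0$, whose spatial part has order $2n$. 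The problem is thereby reduced to an observability estimate for this single scalar degenerate equation.

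The analytic heart, and the step I expect to be hardest, is a global Carleman estimate for the scalar operator $p(-\partial_t,\mathcal{M})$ with weights matched to the degeneracy of $a$ at $x=0$ in both regimes (WD) and (SD). I would start from the scalar degenerate Carleman estimates of Cannarsa--Martinez--Vancostenoble type (cf. \cite{CaMaVa2,de-Ca}) and proceed by induction on the size of the Jordan blocks, absorbing the lower-order cascade terms into the leading weighted norms; it is precisely this inductive absorption that permits Jordan blocks of arbitrary size and removes the restriction to blocks of size at most $4$ used in \cite{FerGonTer}. Finally, I would invoke the Kalman condition \eqref{condiequivalente} to convert the localized interior term produced by the Carleman estimate into one involving only $B^*z$ and its time derivatives, bootstrapping up each cascade to recover all components, which yields \eqref{observ:inequality}. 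The delicate points are the uniform control of the constants as $x\to0$, the compatibility of the weights with the high-order operator, and the correct handling of the boundary conditions inherited by the iterated quantities $\mathcal{M}^\ell z_j$.
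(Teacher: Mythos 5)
Your proposal follows essentially the same route as the paper: necessity via a separated-variables solution $z_T=v\,\mathbf{w}_{i_0}$ built from a non-observable direction of the finite-dimensional pair, and sufficiency via reduction of each component of the adjoint state to the scalar operator $\det(\partial_t I+\mathbf{D}^*\mathcal{M}+A^*)$ of order $2n$ in space, a Carleman estimate proved by induction on the Jordan-block cascade, and the Kalman condition to recover $z$ from $B^*z$ and its $\mathcal{M}$- and $t$-derivatives. The only ingredient you leave implicit is the paper's Proposition \ref{prpo2.2}, namely the uniform coercivity $\|z\|_{L^2}^2\leqslant R\,\|(-\mathcal{M})^{k}\mathcal{K}_{*}z\|_{L^2}^2$ for $k\geqslant(n-1)^2$, which rests on a uniform-in-$i$ lower bound for $\det(K_iK_i^*)$ and is exactly what turns your final ``bootstrapping'' step into an actual inequality.
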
  

The rest of this paper is organized as follows. In Section 2  prove the wellposedness of the problem \eqref{syst24}.  Section 3 is devoted to some controllability results for one parabolic equation. In Section 4,  we prove some useful estimates  under the assumption  \eqref{condiequivalente} and we establish  Carleman estimates for a scalar PDE of order $2n$ in space. In Section 5, we give the proof of the main result. And finally, in Section 6 we study the  null controllability for semilinear systems.
\section{ Wellposedness of the problem}
The semigroup generated by the operator $(\mathcal{M},D(\mathcal{M}))$ is analytic with
angle $\frac{\pi}{2}$  ( see \cite[Theorem 2.8]{cmp}).
In order to prove the wellposedness of the problem \eqref{syst24}, it suffices to show that the operator $\mathbf{D}\mathcal{M}$ generates a $c_0$-semigroup. in fact, similarly like in \cite{Oliveira} we prove, under the assumption that all eigenvalues of the diffusion matrix $\mathbf{D}$ have positive real part the operator $\mathbf{D}\mathcal{M}$ is the generator of an analytic semigroup.

\begin{equation*}
{\small  H_{a}^1=\{ u \in L^2(0,1)/  u \text{ absolutely continuous in} [0,1], \sqrt{a}u_x \in L^2(0,1) \text{ and } u(1)=u(0)=0 \}	}
\end{equation*}
and 
$$\displaystyle H_{a}^2=\left\lbrace u \in H_{a}^1(0,1)/ au_x \in H^1(0,1)  \right\rbrace.
$$
in the (WD) case and 
\begin{equation*}
 H_{a}^1=\{ u \in L^2(0,1)/  u \text{   absolutely continuous in} (0,1], \sqrt{a}u_x \in L^2(0,1) \text{ and } u(1)=0 \}
\end{equation*}
 and 
\begin{align*}
\displaystyle H_{a}^2 &=\{ u \in H_{a}^1(0,1)/ au_x \in H^1(0,1)  \}\\
&=\{ u \in L^2(0,1)/  u \text{   absolutely continuous in} (0,1], au \in H_0^1(0,1),au_x \in H^1(0,1)\text{ and } (au_x)(0)=0  \},
\end{align*}
in the (SD) case with the norms
$${\parallel u\parallel}_{H_{a}^1}^2 ={\parallel u\parallel}_{L^2(0,1)}^2+{\parallel \sqrt{a} u_x\parallel}_{L^2(0,1)}^2, \quad{\parallel u\parallel}_{H_{a}^2}^2 ={\parallel u\parallel}_{H_{a}^1}^2+{\parallel {(a u_x)}_x\parallel}_{L^2(0,1)}^2.$$

We recall also some results about the spectrum of the operator $-\mathcal{M}$ already used in  \cite{benfama}, indeed,  the operator $\mathcal{-M}$ is a definite positive operator. Knowing that $D(\mathcal{M})=H^2_a(0,1)$ is compactly embedded in $L^2(0,1)$ see \cite{CanFrag}. Thus, the spectrum of $-\mathcal{M}$ consists of eigenvalues 
\begin{equation}
	0<\lambda_1<\lambda_2<\dots<\lambda_j<\cdots \qquad\text{ with } \qquad \lambda_j\longrightarrow +\infty
\end{equation}

Therefore : There exists a complete orthonormal set $\{\mathbf{w}_{j}\}$ of eigenvectors of $-\mathcal{M}$.\\
For all $z\in D(-\mathcal{M})$ we have 
\begin{equation}
-\mathcal{M}  z=\sum_{j=1}^{+\infty} \lambda_j  \langle z ,\mathbf{w}_{j} \rangle \mathbf{w}_{j} = \sum_{j=1}^{+\infty} \lambda_j E_j z,
\end{equation}
where $\langle \cdot,\cdot\rangle $ is the inner product in $L^2(0,1)$ and 
\begin{equation*}
	 E_j z= \langle z ,\mathbf{w}_{j} \rangle \mathbf{w}_{j}.
\end{equation*}  
  So, $\{E_j\}$ is a family of complete orthogonal projections in $L^2(0,1)$ and
  $$z= \sum_{j=1}^{+\infty} E_j z,\quad z\in L^2(0,1) .$$
  
  $\mathcal{-M}$  generates a strongly continuous semigroup $\{e^{\mathcal{-M}t }\}$ given by
  $$ e^{\mathcal{-M}t }z=\sum_{j=1}^{+\infty}e^{\lambda_j t } E_j z.$$

 In the Hilbert space  $\mathbb{H}:= (L^2(0,1))^n$we define the following linear operator:
 $\ds 
 \mathcal{A}: D(\mathcal{A})\subset \mathbb{H}\rightarrow \mathbb{H}
 $ given by  $D(\mathcal{A})=(H_{a}^2)^n$ and $\mathcal{A} u=-\mathbf{D}\mathcal{M} u$.
 
 We have the following result. 
 \begin{theorem}
 	  Assume that all eigenvalues of $\mathbf{D}$ have positive real part. Then $\mathcal{A}$
 	 is sectorial and therefore, $-\mathcal{A}$ is the generator of an analytic semigroup $\{e^{-\mathcal{A}t}\,:\,t\geq 0\}$ in $\mathbb{H}$.
 \end{theorem}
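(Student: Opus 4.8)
The plan is to exploit the tensor–product structure of $\mathcal{A}=-\mathbf{D}\mathcal{M}=\mathbf{D}(-\mathcal{M})$, in which $\mathbf{D}$ acts only on the $\C^n$–variable while $-\mathcal{M}$ acts only on the spatial variable, so that the two commute. It suffices to produce $\theta_0\in(0,\pi/2)$ and $M>0$ with $\sigma(\mathcal{A})\subset\overline{\Sigma_{\theta_0}}$, where $\Sigma_\theta:=\{\lambda\in\C\setminus\{0\}:|\arg\lambda|<\theta\}$, together with the resolvent bound $\|(\lambda-\mathcal{A})^{-1}\|\le M/|\lambda|$ for every $\lambda\notin\Sigma_\theta$ and every $\theta\in(\theta_0,\pi/2)$; by the classical characterization of bounded analytic semigroups this yields that $\mathcal{A}$ is sectorial and that $-\mathcal{A}$ is a generator. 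First I would reduce the problem by the Jordan decomposition $\mathbf{D}=PJP^{-1}$ of \eqref{h1}: since $P$ acts on $\C^n$ it commutes with $-\mathcal{M}$, so $(\lambda-\mathcal{A})^{-1}=P\,(\lambda-J(-\mathcal{M}))^{-1}P^{-1}$ and $\|(\lambda-\mathcal{A})^{-1}\|\le\|P\|\,\|P^{-1}\|\,\|(\lambda-J(-\mathcal{M}))^{-1}\|$. As $J=\mathrm{diag}(J_1,\dots,J_p)$ is block diagonal, $J(-\mathcal{M})$ splits accordingly and it is enough to estimate the resolvent of a single block $J_i(-\mathcal{M})$ on $(L^2(0,1))^{k_i}$, where $k_i$ is the size of $J_i$.

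Next I would diagonalize the spatial part. Using the spectral decomposition $-\mathcal{M}=\sum_{j\ge1}\lambda_j E_j$ recalled above, on each finite-dimensional reducing subspace $E_j(L^2)^{k_i}$ the operator $J_i(-\mathcal{M})$ acts as the matrix $\lambda_j J_i$, and since the $E_j$ are mutually orthogonal and complete,
\begin{equation*}
\|(\lambda-J_i(-\mathcal{M}))^{-1}\|=\sup_{j\ge1}\big\|(\lambda I_{k_i}-\lambda_j J_i)^{-1}\big\|_{\C^{k_i}}.
\end{equation*}
Writing $J_i=d_iI+N$ with $N$ the nilpotent superdiagonal shift ($N^{k_i}=0$, $\|N\|=1$) and setting $\mu=\lambda-\lambda_j d_i$, the nilpotent expansion $(\lambda I_{k_i}-\lambda_j J_i)^{-1}=\sum_{l=0}^{k_i-1}\lambda_j^{\,l}\,\mu^{-(l+1)}N^l$ gives
\begin{equation*}
\big\|(\lambda I_{k_i}-\lambda_j J_i)^{-1}\big\|\le\sum_{l=0}^{k_i-1}\frac{\lambda_j^{\,l}}{|\lambda-\lambda_j d_i|^{\,l+1}}.
\end{equation*}

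The decisive point, and the place where the hypothesis $\Re d_i>0$ enters, is a geometric separation estimate for $|\lambda-\lambda_j d_i|$. Since $\Re d_i>0$ one has $|\arg d_i|\le\theta_0:=\max_i|\arg d_i|<\pi/2$, so for $\lambda\notin\Sigma_\theta$ the arguments of $\lambda$ and of $\lambda_j d_i$ are separated by at least $\delta:=\theta-\theta_0>0$; an elementary planar estimate then gives $|\lambda-\lambda_j d_i|\ge\sin(\delta)\max(|\lambda|,\lambda_j|d_i|)$, hence simultaneously $|\lambda-\lambda_j d_i|\ge\sin(\delta)\,|\lambda|$ and $|\lambda-\lambda_j d_i|\ge\sin(\delta)\,\gamma\,\lambda_j$ with $\gamma:=\min_i|d_i|>0$. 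Inserting both bounds controls each summand by $\lambda_j^{\,l}\,|\lambda-\lambda_j d_i|^{-l}\cdot|\lambda-\lambda_j d_i|^{-1}\le(\sin\delta\,\gamma)^{-l}(\sin\delta\,|\lambda|)^{-1}$, so the whole sum is $\le M_i/|\lambda|$ uniformly in $j$; taking the maximum over the blocks and reinstating $\|P\|\,\|P^{-1}\|$ yields $\|(\lambda-\mathcal{A})^{-1}\|\le M/|\lambda|$ on $\C\setminus\Sigma_\theta$. The same expansion shows $\lambda-\mathcal{A}$ is invertible there, and $\mathcal{A}$ is closed and densely defined because $\mathbf{D}$ is boundedly invertible and $D(\mathcal{M})=H_a^2$ is dense, completing the verification that $\mathcal{A}$ is sectorial. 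The main obstacle is exactly the non-diagonalizable (nilpotent) part: the factors $\lambda_j^{\,l}$ grow with $j$ and would destroy the uniform resolvent bound, and only the sector condition $\theta_0<\pi/2$, equivalent to $\Re d_i>0$, forces $|\mu|$ to grow like $\lambda_j$ fast enough to absorb them — which is also what removes the artificial restriction on the Jordan block sizes imposed in \cite{FerGonTer}.
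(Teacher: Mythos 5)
Your proposal is correct and follows essentially the same route as the paper: diagonalize in the spatial variable via the spectral family $\{E_j\}$ of $-\mathcal{M}$, reduce the resolvent of $\mathcal{A}$ to the family of matrix resolvents $(\lambda-\lambda_j\mathbf{D})^{-1}$, and establish the uniform bound $\|(\lambda-\lambda_j\mathbf{D})^{-1}\|\leqslant C/|\lambda|$ for $\lambda$ outside a sector containing all $\lambda_j d_i$. The only difference is that the paper simply asserts this uniform matrix bound, whereas you actually prove it (Jordan reduction, nilpotent expansion, and the angular separation estimate $|\lambda-\lambda_j d_i|\geqslant\sin(\delta)\max(|\lambda|,\lambda_j|d_i|)$), which is the genuinely nontrivial point in the non-diagonalizable case.
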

 \begin{proof}
 	  Let $\theta\in (0,\frac{\pi}{2})$  such that $|\mathrm{arg}\lambda_{\mathbf{D}}|<\theta$  for any eigenvalue $\lambda_{\mathbf{D}}$ of $\mathbf{D}$. We prove that the sector
\begin{equation*}
	 S=\{z\in\C\,:\:\theta\leqslant |\mathrm{arg}z|\leqslant \pi,\,z\neq 0  \}
\end{equation*} 	 
is in the resolvent set of $\mathcal{A}$  and there exists a constant $C$ such that for any $z\in S$ 	 
 	 $$ \|(z-\mathcal{A})^{-1} \|\leqslant \frac{C}{|z|}$$
For $z\in S$ and  $f\in \mathbb{H}$, let u be given by 	
$$u=\sum_{j=1}^{\infty} (z-\lambda_j\mathbf{D})^{-1}f_{j}\mathbf{w}_{j} $$ 
 where $f_{j}=\int_{(0,1)}f\mathbf{w}_{j}dx$. Since $z\in S$  implies $\ds\frac{z}{\lambda_j}$  is not an eigenvalue of $\mathbf{D}$, the matrix $z-\lambda_j\mathbf{D}$ is invertible and there exists a constant $C>0$ such that 
 $\ds \|(z-\lambda_j\mathbf{D})^{-1} \|\leqslant \frac{C}{|z|}$ for all $ j\geqslant 1.$
  It follows that the above series is convergent in $\mathbb{H}$, so $u$ is well defined and  
  $\|u\|\leqslant \frac{C}{|z|}\|f\|$.  Also,
  \begin{align*}
  	zu+\mathbf{D}\mathcal{M}u&=\sum_{j=1}^{\infty}\big[ z(z-\lambda_j\mathbf{D})^{-1}f_{j}-\lambda_j\mathbf{D}f_{j}\big]\mathbf{w}_{j}\\
  	&=\sum_{j=1}^{\infty}(z-\lambda_j\mathbf{D})^{-1}\big[ z-\lambda_j\mathbf{D}\big]f_{j}\mathbf{w}_{j}\\
 	&=\sum_{j=1}^{\infty} f_{j}\mathbf{w}_{j}= f,
   \end{align*}
  so, $u= (z-\mathcal{A})^{-1}f$. Therefore, $z$ is in the resolvent set of $\mathcal{A}$,
   $$ \|(z-\mathcal{A})^{-1} \|\leqslant \frac{C}{|z|}$$
  and the proof is complete.
 \end{proof}

Thus, \eqref{syst24} is well posed in the sens of semigroup theory and the following global existence result holds.
\begin{theorem}\label{theo2.2}
	 Under the Hypothesis \eqref{h1}-\eqref{jordan:block}, for all $(y_1^0,\cdots,,y_n^0)\in (L^2(0,1))^n$ and  $v\in (L^2(Q))^m$  
	 there exists a unique  mild solution $(y_1(t),\cdots,y_n(t))$ of \eqref{syst24} which belongs to $$X_T:=C\left([0,T],\left(L^2(0,1)\right)^n\right)\cap L^2\left(0,T;{\left( H_{a}^1\right)}^n \right)$$ and satisfies
	 \begin{multline*}
	 \sup_{t\in[0,T]}\|(y_1,\cdots,y_n)(t)\|^2_{\left(L^2(0,1)\right)^n }+\int_0^T \|
	 (\sqrt{a} y_{1 x},\cdots,\sqrt{a}y_{n x}) \|^2_{L^2}dt \\
	  \leq C_T\left(\| (y_1^0,\cdots,,y_n^0) \|^2_{(L^2(0,1))^n}+ \|v\|^2_{(L^2(Q))^m} \right),
	 \end{multline*}
	  	for a constant $C_T>0$.\\
	 Moreover, if $(y_1^0,\cdots,,y_n^0)\in \left( H^1_{a}\right)^n$,  then  $$(y_1,\cdots,y_n)\in C\left([0,T],\left( H^1_{a}\right)^n\right)
	 \cap\, H^1\left(0,T;\left(L^2(0,1)\right)^n\right)\cap L^2\left(0,T;\left( H^2_{a}\right)^n \right )$$
	 and
	 \begin{multline*}
	 \sup_{t\in[0,T]}\|(y_1,\cdots,y_n)(t)\|^2_{\left(H^1_{a}\right)^n} + \int_0^T\left( \| (y_{1 t},\cdots,y_{n t})\|^2_{L^2}+\|((a y_{1 x})_x,\cdots,(a y_{n x})_x)\|^2_{L^2} \right)dt\\
	 \leq C_T\left(\|(y_1^0,\cdots,y_n^0) \|^2_{\left( H^1_{a}\right)^n}+ \|v\|^2_{(L^2(Q))^m}
	 \right)\numberthis\label{therem22equ:2}
	 \end{multline*}
	 for a constant $C_T>0$.
\end{theorem}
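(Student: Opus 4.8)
The plan is to recast \eqref{syst24} as the abstract inhomogeneous Cauchy problem $Y'(t)=(-\mathcal{A}+A)Y(t)+F(t)$, $Y(0)=Y_0$, on $\mathbb{H}=(L^2(0,1))^n$, where $\mathcal{A}u=-\mathbf{D}\mathcal{M}u$ with $D(\mathcal{A})=(H_a^2)^n$, the coupling matrix $A$ acts as a bounded operator on $\mathbb{H}$, and $F(t)=Bv(t)\mathbbm{1}_\omega\in L^2(0,T;\mathbb{H})$. By the preceding theorem $-\mathcal{A}$ generates an analytic semigroup; since $A\in\mathcal{L}(\mathbb{H})$ is a bounded perturbation, $-\mathcal{A}+A$ again generates an analytic (in particular strongly continuous) semigroup $\{S(t)\}$. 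Consequently the problem admits a unique mild solution given by Duhamel's formula $Y(t)=S(t)Y_0+\int_0^t S(t-s)F(s)\,ds$, and the strong continuity of $\{S(t)\}$ together with the continuity of the convolution yields $Y\in C([0,T];\mathbb{H})$. This settles existence, uniqueness and the continuity part $Y\in C([0,T];\mathbb{H})$; it then remains to establish the two energy inequalities.

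For the first estimate I would proceed by energy methods, first for regular data $Y_0\in D(\mathcal{A})$ and smooth $v$, where analytic-semigroup theory guarantees that $Y$ is a strict solution with enough regularity to legitimate the computations below, and then extend to $Y_0\in\mathbb{H}$, $v\in L^2(Q)$ by density and the a priori estimate. Testing the equation with $Y$ in $\mathbb{H}$ and integrating by parts in $x$, the boundary terms vanish because of the condition $\mathbf{C}Y=0$ (namely $Y(1)=0$ together with either $Y(0)=0$ in the (WD) case or $(aY_x)(0)=0$ in the (SD) case), which gives
\begin{equation*}
\frac12\frac{d}{dt}\|Y\|_{\mathbb{H}}^2+\int_0^1 a\,\mathbf{D}Y_x\cdot Y_x\,dx=\langle AY,Y\rangle+\langle F,Y\rangle .
\end{equation*}
The coercivity \eqref{cond1}, applied pointwise to $\xi=Y_x(x)$, bounds the second term from below by $\alpha_0\|\sqrt a\,Y_x\|_{L^2}^2$, while the right-hand side is controlled by $C\|Y\|_{\mathbb{H}}^2+\tfrac12\|F\|_{\mathbb{H}}^2$. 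Gronwall's lemma then yields the bound on $\sup_{t}\|Y(t)\|_{\mathbb{H}}^2$ and, after reinsertion, on $\int_0^T\|\sqrt a\,Y_x\|_{L^2}^2\,dt$, both by $C_T(\|Y_0\|_{\mathbb{H}}^2+\|v\|_{(L^2(Q))^m}^2)$; this is the first inequality and shows $Y\in L^2(0,T;(H_a^1)^n)$.

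For the second estimate, assuming $Y_0\in(H_a^1)^n$, I would test the equation with $-\mathcal{M}Y=-(aY_x)_x$ instead. The same boundary analysis gives $\langle Y_t,-\mathcal{M}Y\rangle=\tfrac12\frac{d}{dt}\|\sqrt a\,Y_x\|_{L^2}^2$, while the principal term becomes
\begin{equation*}
\langle\mathbf{D}\mathcal{M}Y,-\mathcal{M}Y\rangle=-\int_0^1\mathbf{D}(\mathcal{M}Y)\cdot(\mathcal{M}Y)\,dx\leqslant-\alpha_0\|\mathcal{M}Y\|_{\mathbb{H}}^2,
\end{equation*}
where crucially the coercivity \eqref{cond1} is used pointwise on the vector $\mathcal{M}Y(x)$, so that the non-symmetry and non-diagonalizability of $\mathbf{D}$ play no role. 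Absorbing $\langle AY,-\mathcal{M}Y\rangle$ and $\langle F,-\mathcal{M}Y\rangle$ by Young's inequality into $\tfrac{\alpha_0}{2}\|\mathcal{M}Y\|_{\mathbb{H}}^2$ plus lower-order terms, integrating in time and invoking the first estimate to control $\|Y\|_{\mathbb{H}}$ produces
\begin{equation*}
\sup_{t}\|\sqrt a\,Y_x(t)\|_{L^2}^2+\int_0^T\|(aY_x)_x\|_{L^2}^2\,dt\leqslant C_T\big(\|Y_0\|_{(H_a^1)^n}^2+\|v\|_{(L^2(Q))^m}^2\big),
\end{equation*}
that is $Y\in C([0,T];(H_a^1)^n)\cap L^2(0,T;(H_a^2)^n)$. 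Finally, reading $Y_t=\mathbf{D}(aY_x)_x+AY+F$ directly from \eqref{syst24} and using that $\mathbf{D},A$ are bounded gives $\int_0^T\|Y_t\|_{\mathbb{H}}^2\,dt\leqslant C_T(\cdots)$, hence $Y\in H^1(0,T;\mathbb{H})$, completing the second inequality.

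The routine but genuine obstacle is the rigorous justification of the two energy identities in the degenerate weighted setting: the integrations by parts at the degenerate endpoint $x=0$ must be carried out within the spaces $H_a^1,H_a^2$, and the formal multiplier computations must be validated (by first working with strict solutions emanating from $D(\mathcal{A})$, or equivalently by a Galerkin scheme in the eigenbasis $\{\mathbf{w}_j\}$ of $-\mathcal{M}$) before passing to the limit through the uniform a priori bounds. The decisive simplification is that condition \eqref{cond1} enters only pointwise in $x$, applied to $Y_x$ and to $\mathcal{M}Y$, which completely decouples the matrix structure of $\mathbf{D}$ from the spatial degeneracy of $a$.
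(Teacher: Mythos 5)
Your proposal is correct and follows essentially the same route as the paper: the paper proves that $\mathcal{A}=-\mathbf{D}\mathcal{M}$ with domain $(H_a^2)^n$ is sectorial and then states Theorem \ref{theo2.2} without further argument, the intended proof being exactly your bounded-perturbation/Duhamel construction of the mild solution followed by the standard energy-multiplier estimates (testing with $Y$ and with $-\mathcal{M}Y$, with \eqref{cond1} applied pointwise to $Y_x$ and to $\mathcal{M}Y$, as in the diagonal and diagonalizable cases of \cite{FadiliManiar,benfama}). Your remark that the pointwise coercivity completely decouples the matrix structure of $\mathbf{D}$ from the degeneracy of $a$ is precisely what makes the statement survive the non-diagonalizable setting, and your caveat about justifying the integrations by parts at the degenerate endpoint via strict solutions or a Galerkin scheme in the eigenbasis $\{\mathbf{w}_j\}$ correctly identifies the only delicate point.
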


\section{ Carleman estimates for one equation}

In order to establish a Carleman estimate for the adjoint system \eqref{adjsyst24}, we are led  to see Carleman estimates already established in the case of one single parabolic degenerate equation of order 2 in space \cite{benfama,FadiliManiar}.
\begin{equation}\label{problem}
\begin{cases}
u_t-(a(x)u_x)_x+ bu=f,\quad (t,x)\in Q, \\
\mathbf{C} u =0 \\
u(0,x)=u_0(x),   \quad x\in (0,1).
\end{cases}
\end{equation}
For this purpose, let us consider the following time and space weight functions
\begin{equation}\label{fonctionspoids}
\begin{cases}
\ds \theta(t)=\frac{1}{t^4(T-t)^4}\,\,, \psi(x)=\lambda\left(\int_0^x\frac{y}{a(y)}dy-c\right)\,\,\text{ and }\, \varphi(t,x)=\theta(t)\psi(x),\\
\Phi(t,x)=\theta(t)\Psi(x)\,\,  \,\text{ and  }\,\, \Psi(x)=e^{\rho\sigma(x)} -e^{2\rho{\parallel\sigma\parallel}_{\infty}},
\end{cases}
\end{equation}
where   $\sigma$  a $\mathcal{C}^2([0,1])$ function such that $\sigma(x)>0$ in $(0,1)$, $\sigma(0)=\sigma(1)=0$ and $\sigma_x(x)\neq 0$ in $[0,1]\setminus \omega_0$, $\omega_0$ is an open subset of $\omega$, and the parameters $c$, $\rho$ and $\lambda$ are  chosen as in \cite{FadiliManiar} such that

\begin{equation}\label{poids11} 
\ds c>4^n c_0 \,\,,\quad \,\, \rho>\frac{\ln\left(\frac{4^n(c-c_0)}{c-4^n c_0}\right)}{{\parallel\sigma\parallel}_{\infty}},
\end{equation}
\begin{equation}\label{poids12}
\frac{e^{2\rho{\parallel\sigma\parallel}_{\infty}}}{c-c_0}<\lambda<\frac{4^n}{(4^n-1)c}{\left( e^{2\rho{\parallel\sigma\parallel}_{\infty}}-e^{\rho{\parallel\sigma\parallel}_{\infty}}\right)}.
\end{equation} 
where $\ds c_0=\int_0^1\frac{x}{a(x)}dx$.

Let $\omega'$ a subset of $\omega$ and set $\omega'':=(x_1^{''},x_2^{''}) \subset\subset\omega'$ and $\xi\in \mathcal{C}^\infty([0,1])$ such that $0\leq \xi(x)\leq 1$
for $x\in(0,1)$, $\xi(x)=1$ for $x\in (0,x_1^{''})$ and $\xi(x)=0$
for $x\in (x_2^{''}, 1)$.\\
The two following results have been proved in \cite{benfama}. 
\begin{proposition}\label{propo3.6}
	Let $T>0$ and  $\tau\in \mathbb{R}$. Then there exists two positive constants $C$ and $s_0$ such that, for all $\ds u_0\in L^2(0,1)$, the solution $u$ of equation \eqref{problem} satisfies
	\small{\begin{multline}\label{eqproposition23}
		\int\!\!\!\!\!\int_{Q} \left(s^{\tau-1}\theta^{\tau-1}\xi^2 u_t^2+s^{\tau-1}\theta^{\tau-1}\xi^2 (\mathcal{M}u)^2 +s^{1+\tau}\theta^{1+\tau} a \xi^2 u_x^2+s^{3+\tau} {\theta}^{3+\tau}\frac{x^2}{a}\xi^2  u^2\right)e^{2s\varphi(t,x)}dxdt \\
		\leqslant C\left(\int\!\!\!\!\!\int_{Q} \xi^2 s^\tau\theta^{\tau}f^2(t,x)e^{2s\varphi(t,x)}dx dt+\int\!\!\!\!\!\int_{Q_{\omega'}} s^{2+\tau}\theta^{2+\tau} u^2 e^{2s\varphi(t,x)}dxdt  \right)
		\end{multline}}
	for all $s\geqslant s_0$.
\end{proposition}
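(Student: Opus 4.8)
The plan is to prove the estimate by the classical Carleman conjugation method adapted to the degenerate operator, localized near the degenerate endpoint $x=0$ by the cutoff $\xi$. First I would set $v=\xi u$ and record the equation it solves. A direct computation using $(av_x)_x=\xi(au_x)_x+2a\xi_x u_x+(a\xi_x)_x u$ gives
\begin{equation*}
v_t-(av_x)_x+bv=\xi f-2a\xi_x u_x-(a\xi_x)_x u,
\end{equation*}
where the two error terms are supported in $\{\xi_x\neq 0\}\subset(x_1'',x_2'')\subset\subset\omega'$. Since $\xi\equiv 0$ near $x=1$, the function $v$ vanishes identically there, so the only genuine boundary to control is the degenerate one at $x=0$, where the boundary condition $\mathbf{C}u=0$ (Dirichlet in (WD), $(au_x)(0)=0$ in (SD)) is inherited by $v$.

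Next I would conjugate with the degenerate weight. Putting $z=e^{s\varphi}v$ and writing $e^{s\varphi}\big(\partial_t v-(av_x)_x\big)=L_s^+z+L_s^-z$ as the sum of its self-adjoint part $L_s^+$ and skew-adjoint part $L_s^-$, the conjugated equation reads $L_s^+z+L_s^-z=e^{s\varphi}(\xi f-2a\xi_x u_x-(a\xi_x)_x u)-bz$. Taking the $L^2(Q)$ norm and expanding $\|L_s^+z+L_s^-z\|^2$, the cross term $2\langle L_s^+z,L_s^-z\rangle$ is integrated by parts in $t$ and $x$ to produce the distributed positive terms matching the left-hand side, namely the weighted gradient term $s^{1+\tau}\theta^{1+\tau}a\,v_x^2$ and the weighted zeroth-order term $s^{3+\tau}\theta^{3+\tau}\tfrac{x^2}{a}v^2$, together with boundary contributions at $x=0$ and $x=1$. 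The choice $\psi_x=\lambda x/a$ is exactly what makes $a\psi_x=\lambda x$ and yields these clean coefficients; the parameters $c,\rho,\lambda$ from \eqref{poids11}--\eqref{poids12} and $s\geq s_0$ are then tuned so the leading terms dominate every absorbable remainder, and the extra factor $s^\tau\theta^\tau$ is harmless because $|\theta_t|\lesssim\theta^{5/4}$, so the time-weight derivatives it generates are of lower order than $s^{3+\tau}\theta^{3+\tau}$.

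The step I expect to be the main obstacle is the treatment of the boundary terms at the degenerate endpoint $x=0$. Because $a(0)=0$, the integrated quantities carry factors $a$, $a_x$ and $x^2/a$, and one must invoke the structural hypotheses (WD)(ii) and (SD)(ii)--(iii) — namely $xa'\leq Ka$ in the admissible range of $K$ together with the monotonicity of $a(x)/x^\theta$ — to show that these terms either vanish or have the correct sign. A weighted Hardy-type inequality enters precisely here to absorb the lower-order $\int z^2$ (coming from $bz$) into $\int\tfrac{x^2}{a}z^2$, which is delicate since $x^2/a\to 0$ as $x\to 0$. In the (SD) case the Neumann condition $(au_x)(0)=0$ kills the surviving boundary term, and in the (WD) case the Dirichlet condition does the same. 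Once the distributed gradient and zeroth-order terms are secured, the parabolic terms $s^{\tau-1}\theta^{\tau-1}u_t^2$ and $s^{\tau-1}\theta^{\tau-1}(\mathcal{M}u)^2$ are recovered by bootstrapping: since the cross term is nonnegative after absorption, $\|L_s^+z\|^2+\|L_s^-z\|^2\leq\|L_sz\|^2\leq C\,\|\text{RHS}\|^2$, and $\|L_s^-z\|^2$ controls the weighted $z_t^2$ (hence $u_t^2$) while $\|L_s^+z\|^2$ controls the weighted $(az_x)_x$ (hence $(\mathcal{M}u)^2$), after converting back from $z$ to $u$ via $u_t=\mathcal{M}u-bu+f$.

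Finally, the two error terms on the right, involving $u_x^2$ and $u^2$ over $\omega'$, must be reduced to the pure observation $\iint_{Q_{\omega'}}s^{2+\tau}\theta^{2+\tau}u^2e^{2s\varphi}$. The $u^2$ term is already of the required form (with room to spare in the power of $s$), while the gradient error is removed by a Caccioppoli inequality: multiplying the original equation by a suitably weighted localizing factor and integrating bounds the local weighted $\int u_x^2$ by a local weighted $\int u^2$ on a slightly enlarged subdomain still contained in $\omega'$. Matching the powers of $s$ and $\theta$ throughout then yields the stated inequality for all $s\geq s_0$.
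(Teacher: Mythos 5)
The paper does not actually prove this proposition: it is imported verbatim from \cite{benfama} (with the underlying machinery going back to \cite{FadiliManiar} and to the Cannarsa--Martinez--Vancostenoble Carleman estimates for degenerate operators), so there is no in-paper argument to compare against line by line. Your outline follows exactly the standard route used in those references --- conjugation $z=e^{s\varphi}\xi u$, splitting into self-adjoint and skew-adjoint parts, extracting the distributed terms from the cross product, exploiting $a\psi_x=\lambda x$, handling the degenerate boundary terms via (WD)(ii)/(SD)(ii)--(iii), a Hardy--Poincar\'e inequality, and a Caccioppoli inequality to trade the local gradient for a local $u^2$ --- so the approach is the right one and I have no objection to the strategy.

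That said, as written this is a proof plan rather than a proof: the three steps you yourself identify as the crux (the explicit computation of $2\langle L_s^+z,L_s^-z\rangle$, the sign of the boundary contributions at $x=0$, and the Hardy-type absorption of $\iint z^2$ into $\iint \frac{x^2}{a}z^2$, which requires interpolating against $\iint \frac{a}{x^2}z^2\lesssim \iint a z_x^2$ since $x^2/a\to0$) are named but not carried out, and the exponent $2+\tau$ in the local observation term is asserted ``with room to spare'' rather than checked. One small misattribution: the constraints \eqref{poids11}--\eqref{poids12} on $c$, $\rho$, $\lambda$ are not what makes the leading Carleman terms dominate the remainders (that is done by taking $s\geq s_0$); they are designed so that the degenerate weight $\varphi$ and the nondegenerate weight $\Phi$ are comparable, which is needed later when the estimates of Propositions \ref{propo3.6} and \ref{proposition26} are glued and iterated over the $n$ equations.
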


Proposition \ref{propo3.6} gave a Carleman estimate in $(0, x_1^{'} )$. the following Proposition is a non degenerate Carleman estimate to the equation \eqref{problem} on the interval $(x_1^{'},1)$.
\begin{proposition}\label{proposition26}
	Let $T>0$ and $\tau\in\R$. Then, there exist two
	positive constants $C$ and $s_0$ such that for every $u_0\in L^2(0,1)$, the solution $u$ of equation  \eqref{problem} satisfies
	\begin{multline}\label{eqproposition26}
	\int\!\!\!\!\!\int_{Q} \Big(s^{\tau-1}\theta^{\tau-1}\varsigma^2 u_t^2+s^{\tau-1}\theta^{\tau-1}\varsigma^2 (\mathcal{M}u)^2+s^{1+\tau}\theta^{1+\tau} a\varsigma^2 u_x^2+s^{3+\tau} {\theta}^{3+\tau}\frac{x^2}{a}\varsigma^2 u^2 \Big)e^{2s\Phi}dx dt \\
	\leq C\Big(\int\!\!\!\!\!\int_{Q}  \varsigma^2 s^\tau \theta^{\tau}f^2(t,x)e^{2s\Phi}dx dt+\int\!\!\!\!\!\int_{Q_{\omega'}} s^{3+\tau}\theta^{3+\tau} u^2 e^{2s\Phi}  dx dt\Big)
	\end{multline}
	for all $s\geqslant s_0$, 	with $\varsigma=1-\xi$.
\end{proposition}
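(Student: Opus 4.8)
The plan is to remove the degeneracy by localising away from $x=0$ and then invoke the classical non-degenerate parabolic Carleman inequality. Since $\varsigma=1-\xi$ vanishes on $(0,x_1'')$ and equals $1$ on $(x_2'',1)$, the function $w:=\varsigma u$ is supported in $[x_1'',1]$, where $a\ge a_0>0$, and it satisfies homogeneous Dirichlet conditions: $w\equiv 0$ near $x=0$, while $w(\cdot,1)=\varsigma(1)u(\cdot,1)=u(\cdot,1)=0$ in both the (WD) and (SD) cases. Using the identity $\mathcal{M}(\varsigma u)=\varsigma\mathcal{M}u+2a\varsigma_x u_x+(a\varsigma_x)_x u$, a direct computation gives
\[
w_t-(a w_x)_x+bw=\varsigma f+h,\qquad h:=-2a\varsigma_x u_x-(a\varsigma_x)_x u ,
\]
where, crucially, $h$ is supported in $\mathrm{supp}\,\varsigma_x\subset\omega''\subset\subset\omega'$. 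Thus $w$ solves a uniformly parabolic equation on its support, with right-hand side $\varsigma f$ plus a term concentrated in the observation region.

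First I would apply the standard Fursikov--Imanuvilov Carleman estimate to $w$ with the weights $\theta$ and $\Phi=\theta\Psi$, $\Psi=e^{\rho\sigma}-e^{2\rho\|\sigma\|_\infty}$. Because $\sigma_x\neq 0$ on $[0,1]\setminus\omega_0$ with $\omega_0\subset\subset\omega'$, and because $\theta=t^{-4}(T-t)^{-4}$ satisfies the usual derivative bounds $|\theta_t|\le C\theta^{5/4}$ and $|\theta_{tt}|\le C\theta^{3/2}$, this yields (as in \cite{benfama})
\begin{multline*}
\iint_Q\Big(s^{\tau-1}\theta^{\tau-1}w_t^2+s^{\tau-1}\theta^{\tau-1}(\mathcal{M}w)^2+s^{1+\tau}\theta^{1+\tau}a w_x^2+s^{3+\tau}\theta^{3+\tau}\tfrac{x^2}{a}w^2\Big)e^{2s\Phi}\\
\le C\Big(\iint_Q s^\tau\theta^\tau(\varsigma f+h)^2 e^{2s\Phi}+\iint_{Q_{\omega_0}}s^{3+\tau}\theta^{3+\tau}w^2 e^{2s\Phi}\Big)
\end{multline*}
for $s\ge s_0$. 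Since $w^2=\varsigma^2 u^2\le u^2$ and $\omega_0\subset\omega'$, the last integral is already bounded by the desired local term. Splitting $(\varsigma f+h)^2\le 2\varsigma^2 f^2+2h^2$ produces the wanted $f$-term, while $|h|^2\le C(u_x^2+u^2)$ on $\omega''$ leaves the single obstruction $\iint_{Q_{\omega''}}s^\tau\theta^\tau u_x^2 e^{2s\Phi}$.

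The main difficulty is to absorb this local first-order term, which I would handle by a Caccioppoli-type inequality. Choosing $\chi\in\mathcal{C}_c^\infty(\omega')$ with $\chi\equiv 1$ on $\omega''$, multiplying the equation for $u$ by $s^\tau\theta^\tau\chi^2 e^{2s\Phi}u$ and integrating by parts over $Q$ (the temporal boundary terms vanish as $\theta\to\infty$ at $t=0,T$, and $a\ge a_0>0$ on $\omega'$), one gains the gradient term $\iint a\,\chi^2 s^\tau\theta^\tau u_x^2 e^{2s\Phi}$ on the left; a Young inequality on the cross term then leaves only zeroth-order quantities and $f$ on the right, giving
\[
\iint_{Q_{\omega''}}s^\tau\theta^\tau u_x^2 e^{2s\Phi}\le C\Big(\iint_{Q_{\omega'}}s^{3+\tau}\theta^{3+\tau}u^2 e^{2s\Phi}+\iint_{Q_{\omega'}}s^\tau\theta^\tau f^2 e^{2s\Phi}\Big).
\]
Both terms on the right already appear in the target estimate \eqref{eqproposition26}.

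Finally I would pass from the $w$-norms back to the weighted $\varsigma^2 u$-norms on the left of \eqref{eqproposition26}. One has $w^2=\varsigma^2 u^2$ and $w_t^2=\varsigma^2 u_t^2$ identically, whereas $\varsigma^2 u_x^2\le 2w_x^2+2\varsigma_x^2 u^2$ and $\varsigma^2(\mathcal{M}u)^2\le 3(\mathcal{M}w)^2+C\varsigma_x^2(a^2 u_x^2+u^2)$, the correction terms being supported in $\omega''$. Bounding these corrections by the local gradient and potential integrals above, and collecting everything, yields exactly \eqref{eqproposition26} for all $s\ge s_0$. The only genuinely delicate point is the bookkeeping of the powers of $s\theta$ in the Caccioppoli step, so that every correction is dominated by the $s^{3+\tau}\theta^{3+\tau}$ local term.
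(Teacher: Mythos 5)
The paper itself offers no proof of this proposition: it is quoted verbatim from \cite{benfama}, and your overall route --- set $w=\varsigma u$, observe that $w$ is supported in $[x_1'',1]$ where $a$ is uniformly positive and solves a nondegenerate parabolic problem with source $\varsigma f+h$, $h=-2a\varsigma_x u_x-(a\varsigma_x)_x u$ supported in $\supp\varsigma_x\subset[x_1'',x_2'']\Subset\omega'$, then apply the classical Fursikov--Imanuvilov estimate with the weight $\Phi=\theta\Psi$ and absorb the local gradient by a Caccioppoli inequality --- is exactly the standard strategy behind such statements in this literature. Your computation of the commutator, the verification of the Dirichlet conditions for $w$ in both the (WD) and (SD) cases, and the conversion between the $w$-norms and the $\varsigma^2 u$-norms are all correct.

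There is, however, a genuine gap at the final absorption step. Your Caccioppoli inequality produces the term $C\iint_{Q_{\omega'}}s^{\tau}\theta^{\tau}f^2e^{2s\Phi}\,dx\,dt$, and you assert that it ``already appears in the target estimate \eqref{eqproposition26}''. It does not: the source term in \eqref{eqproposition26} carries the weight $\varsigma^2$, and $\varsigma=1-\xi$ vanishes identically on $(0,x_1'')$, hence on the left portion of $\omega'$. Since your cutoff $\chi$ must equal $1$ on $\supp\varsigma_x\subset[x_1'',x_2'']$ and $\varsigma(x_1'')=0$, there is no lower bound $\varsigma\geqslant c>0$ on $\supp\chi$, so the unweighted local $f$-term is \emph{not} dominated by $\iint_Q\varsigma^2 s^{\tau}\theta^{\tau}f^2e^{2s\Phi}\,dx\,dt$. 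This is not a bookkeeping issue with powers of $s\theta$: any Caccioppoli argument pairs the equation against $\chi^2(s\theta)^{\tau}e^{2s\Phi}u$ and therefore unavoidably generates the pairing $\iint\chi^2(s\theta)^{\tau}e^{2s\Phi}uf$, i.e., a local $f$-contribution without the factor $\varsigma^2$. As written, your argument proves \eqref{eqproposition26} only with the extra right-hand term $C\iint_{Q_{\omega'}}s^{\tau}\theta^{\tau}f^2e^{2s\Phi}\,dx\,dt$, which is a strictly weaker statement. To obtain the proposition exactly as stated, the first-order part of the commutator must be handled without invoking interior regularity of $u$: write $h=-(2a\varsigma_x u)_x+(a\varsigma_x)_x u$ and treat the divergence part with a Carleman estimate admitting $H^{-1}$-type (divergence-form) sources in the spirit of Imanuvilov--Yamamoto, which converts the local first-order contribution into a purely zeroth-order local observation of the form $\iint_{Q_{\omega''}}(s\theta)^{\tau+2}u^2e^{2s\Phi}\,dx\,dt$ with no local $f$-term; one must then recover the $u_t^2$ and $(\mathcal{M}u)^2$ blocks of the left-hand side from the equation. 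Alternatively, the estimate must be stated with the additional local $f$-term --- which would in fact still suffice for the absorption arguments of Section 4, where these propositions are applied with $f$ equal to combinations of solution components.
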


\section{Some useful results}

Since the number of control forces is less than the number of equations, we need to highlight the equation coupling tools. In deed the equations are coupled by  means this algebraic condition
 $\ds  \mathrm{rank} [\lambda_i\mathbf{D}-A|B]=n\:\:\forall i\geqslant 1$. Let us introduce the following operators $\ds \mathcal{K} :D(\mathcal{K})\subset L^2([0,1];\R^{nm})\mapsto L^2([0,1];\R^{n})$ and $\ds \mathcal{K}_{*} :D(\mathcal{K}_{*})\subset L^2([0,1];\R^{n})\mapsto L^2([0,1];\R^{nm})$ with 
$$D(\mathcal{K})=\{v\in L^2([0,1];\R^{nm}) : [-\mathbf{D}\mathcal{M}-A|B]v\in L^2([0,1];\R^{n}) \} $$
$$D(\mathcal{K}_{*})=\{\varphi\in L^2([0,1];\R^{n}) : [-\mathbf{D}\mathcal{M}-A|B]^{*}\varphi\in L^2([0,1];\R^{nm}) \} $$
and 
\begin{equation}\label{operteurdekalman}
	 \mathcal{K}v:=[-\mathbf{D}\mathcal{M}-A|B]v,\:\: \mathcal{K}_{*}\varphi:= [-\mathbf{D}\mathcal{M}-A|B]^{*}\varphi 
\end{equation}
$\ds \mathcal{K}$ and $\ds \mathcal{K}_{*}$ are densely defined unbounded operators.  we have the following estimate

\begin{proposition}\label{prpo2.2}
	 Let us assume that $\ds  \mathrm{rank} [\lambda_i\mathbf{D}-A|B]=n\:\:\forall i\geqslant 1$, then for every $\varphi$ such that $(-\mathcal{M})^{k}(K_{*}\varphi)$ in $L^2(0,1)$ we have 
\begin{equation}\label{equ:prpo2.2}
	\int_{0}^{1}|\varphi(x,t)|^2 dx\leqslant R \int_{0}^{1}|(-\mathcal{M})^{k}(K_{*}\varphi)(x,t)|^2 dx
\end{equation}
for any $t\in [0,T)$ and any $k\geqslant (n-1)^2$, where $R$ only depends on $n$, $\mathbf{D}$ and $A$.	 
\end{proposition}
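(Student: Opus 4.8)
The plan is to diagonalize the whole inequality against the spectral basis $\{\mathbf{w}_{j}\}$ of $-\mathcal{M}$, turning the functional estimate into a uniform-in-$j$ algebraic estimate for the finite-dimensional Kalman matrices. Fix $t$ and expand the spatial profile $\varphi(\cdot,t)=\sum_{j\ge 1}\varphi_j\mathbf{w}_{j}$ with vector coefficients $\varphi_j=\langle\varphi,\mathbf{w}_{j}\rangle\in\R^n$. Since $\mathcal{M}\mathbf{w}_{j}=-\lambda_j\mathbf{w}_{j}$ and $\mathbf{D},A,B$ are constant, the operator $-\mathbf{D}\mathcal{M}-A$ acts on the mode $\mathbf{w}_{j}$ as the matrix $M_j:=\lambda_j\mathbf{D}-A$; hence the Kalman operator acts as $K_j:=[M_j\,|\,B]=[\lambda_j\mathbf{D}-A\,|\,B]$ and $\mathcal{K}_{*}$ acts as $K_j^{*}$. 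Applying $(-\mathcal{M})^{k}$ multiplies the $j$-th mode by $\lambda_j^{k}$, so by orthonormality
\[
\int_0^1|\varphi|^2\,dx=\sum_{j\ge1}|\varphi_j|^2,\qquad \int_0^1\big|(-\mathcal{M})^{k}(\mathcal{K}_{*}\varphi)\big|^2\,dx=\sum_{j\ge1}\lambda_j^{2k}\,|K_j^{*}\varphi_j|^2 .
\]
It therefore suffices to prove the modewise inequality $|\varphi_j|^2\le R\,\lambda_j^{2k}|K_j^{*}\varphi_j|^2$ with $R$ independent of $j$, because summing over $j$ gives \eqref{equ:prpo2.2}.

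The matrix $K_jK_j^{*}$ is real symmetric and, by the rank hypothesis $\mathrm{rank}\,K_j=n$, positive definite, and $|K_j^{*}\varphi_j|^2=\langle K_jK_j^{*}\varphi_j,\varphi_j\rangle\ge \mu_{\min}(K_jK_j^{*})\,|\varphi_j|^2$. Thus the modewise bound is equivalent to the spectral estimate $\mu_{\min}(K_jK_j^{*})\ge R^{-1}\lambda_j^{-2k}$, which I would obtain from a determinant–trace comparison. Ordering the eigenvalues $\mu_1\ge\cdots\ge\mu_n>0$ and using $\mu_\ell\le \tr(K_jK_j^{*})$,
\[
\mu_{\min}(K_jK_j^{*})=\mu_n=\frac{\det(K_jK_j^{*})}{\prod_{\ell=1}^{n-1}\mu_\ell}\ \ge\ \frac{\det(K_jK_j^{*})}{\big(\tr(K_jK_j^{*})\big)^{n-1}} .
\]
For the denominator, $\|M_j\|\le C\lambda_j$ for $\lambda_j\ge\lambda_1$, so each block satisfies $\|M_j^{i}B\|_F\le C\lambda_j^{n-1}$ for $0\le i\le n-1$, whence $\tr(K_jK_j^{*})=\sum_{i=0}^{n-1}\|M_j^{i}B\|_F^2\le C\lambda_j^{2(n-1)}$ and $\big(\tr(K_jK_j^{*})\big)^{n-1}\le C\lambda_j^{2(n-1)^2}$.

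The decisive step, and the one that lets the non-diagonalizable case go through without bounding the Jordan block sizes, is a uniform positive lower bound for the numerator, $\det(K_jK_j^{*})\ge c_0>0$. For this I would view $q(\mu):=\det\!\big(K(\mu)K(\mu)^{*}\big)$ with $K(\mu)=[\mu\mathbf{D}-A\,|\,B]$ as a real polynomial in $\mu$; the hypothesis \eqref{condiequivalente} says precisely that $q(\lambda_j)>0$ for every $j$. Since $\lambda_j\to+\infty$ and $q$ is positive along this sequence, its leading coefficient is necessarily positive, so $q$ either tends to $+\infty$ or is a positive constant; combined with positivity at the finitely many small $\lambda_j$ this forces $\inf_{j}q(\lambda_j)=c_0>0$. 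Inserting the two bounds gives $\mu_{\min}(K_jK_j^{*})\ge c\,\lambda_j^{-2(n-1)^2}$, i.e. $|\varphi_j|^2\le c^{-1}\lambda_j^{2(n-1)^2}|K_j^{*}\varphi_j|^2$; and because $\lambda_j\ge\lambda_1>0$ and $k\ge(n-1)^2$, the factor $\lambda_j^{2(n-1)^2}$ is absorbed into $R\,\lambda_j^{2k}$ (the resulting $R$ depending only on $n,\mathbf{D},A,B$), which finishes the argument.

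The main obstacle is exactly the determinant lower bound: one has to rule out that the Kalman determinant decays as $\lambda_j\to\infty$, since otherwise the power of $\lambda_j$ in the estimate could not be controlled. The polynomial/leading-coefficient argument dispatches this cleanly and is what replaces the delicate Jordan-block singular-value computations of \cite{FerGonTer}. The only care required is to use \eqref{condiequivalente} both for all $j$ (to bound the finitely many low modes) and asymptotically (to fix the sign of the leading coefficient), and to keep track that all matrices involved are real so that the singular-value/eigenvalue analysis stays over $\R$, the Jordan form of $\mathbf{D}$ entering only implicitly through the norm bound $\|M_j\|\le C\lambda_j$.
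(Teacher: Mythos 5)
Your proposal is correct and follows essentially the same route as the paper's proof: spectral decomposition along the eigenfunctions of $-\mathcal{M}$, reduction to a modewise estimate on $K_j K_j^{*}$, a lower bound on its smallest eigenvalue of the form $\det(K_jK_j^{*})$ divided by an $(n-1)$-fold power of an upper bound on the remaining eigenvalues (you use the trace where the paper uses the operator norm bound $C(1+|\lambda_j|^{2(n-1)})$, which is equivalent up to constants), and the same nonnegative-polynomial argument giving the uniform lower bound $\det(K_jK_j^{*})\geq c_0>0$ by treating the finitely many small $\lambda_j$ separately and the tail via the behavior of the polynomial at infinity. The only other difference is presentational: you expand $\varphi(\cdot,t)$ directly as an infinite series, whereas the paper proves the estimate for finite spectral sums and concludes by density.
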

\begin{proof}  We adapt the same argument as in \cite{FerGonTer} to our degenerate case.
	Let denote by $K_i$ the matrices $\ds K_i=[\lambda_i\mathbf{D}-A|B]\in \mathcal{L}(\R^{nm};\R^n)\quad \forall i\geq 1$. Let $f\in L^2([0,1];\R^n)$ be  given
	\begin{equation}
	f=\sum_{i} a^i \mathbf{w}_i 
	\end{equation} 
	where $\ds a^i \in \R^n$ and $\ds a^i=0$ for all $i\geq p+1$ for some $p\geq 1$, then 
	$$\mathcal{K}_{*} f=\sum_{i} (K_i^{*}a^i) \mathbf{w}_i,\,\,\, (-\mathcal{M})^{k}\mathcal{K}_{*}f=\sum_{i} \lambda_i^{k}(K_i^{*}a^i) \mathbf{w}_i ,$$
	hence 
	\begin{equation}
	\|(-\mathcal{M})^{k}\mathcal{K}_{*}f \|_{L^2}^2=\sum_{i} \lambda_i^{2k}|K_i^{*}a^i|^2.
	\end{equation}

		Let us denote by $\eta_j^i$, for $1\leq j\leq n$, the real and nonnegative eigenvalues of $K_iK_i^*$. Then we have 
		\begin{equation}\label{equa:2.5}
		|K_i^*a^{i}|^2=(K_iK_i^*a^{i},a^{i})\geq \eta_1^i|a^{i}|^2
		\end{equation}
		There exists $c_1$ such that $$\mathrm{ det} K_iK_i^*\geqslant c_1\quad \forall i \geqslant 1.$$
		Indeed, let us set $p(\lambda)=\mathrm{ det} \tilde{K}(\lambda)\tilde{K}(\lambda)^*$  for all $\lambda$, with 
		$$\tilde{K}(\lambda):=[\lambda\mathbf{D}-A |B].$$
		Thus, $p(\lambda)$ is a polynomial function of degree  $2n(n-1)$, $p(\lambda)\geqslant 0$ for all $\lambda$ and $p(\lambda_i)\neq 0 $ for all $i$. Since the roots of $p(\lambda)=0$ are in a disk  of radius $R$ for some $R>0$, then, there exists $C_2>0$ such that $p(\lambda)\geqslant C_2$ for $|\lambda|\geqslant R$. Moreover, for some $\ell$, one has $\lambda_\ell>R$. Hence,
\begin{itemize}
	\setlength\itemsep{1em}
	\item Either $i\leqslant \ell-1$  and then $ \ds\mathrm{ det} K_iK_i^* \geqslant C_3 := \min_{ j\leq \ell-1}\mathrm{ det} K_j K_j^*.$
	\item Or $i\geqslant \ell$ and then $\lambda_i\geqslant \lambda_\ell> R $ and $\ds \mathrm{ det} K_iK_i^* \geqslant C_2 .$
\end{itemize}
Thus,  	$\mathrm{ det} K_iK_i^* \geqslant c_1=\min (C_2,C_3) .$	

Furthermore, for each $i\geqslant 1$  and each $\ell=1,\cdots,n$ there exists $\tilde{a}^{\ell} \in \R^n\setminus \{0\} $  such that
\begin{equation}
	\eta^i_\ell=\frac{(K_iK_i^*\tilde{a}^{\ell},\tilde{a}^{\ell} )}{|\tilde{a}^{\ell}|^2}\leqslant \|K_iK_i^*\|_2 \leqslant C_4(1+|\lambda_i|^{2(n-1)}),
\end{equation}
where $\|\cdot\|_2$ in the usual Euclidean norm in $\mathcal{L}(\R^n)$. 
Then we infer 
 $$\ds \eta^i_1=\frac{\mathrm{ det } K_iK_i^*}{\underset{\ell\geq 2}{\prod}\eta^i_\ell}\geq \frac{c_1}{  C_4^{n-1}(1+|\lambda_i|^{2(n-1)})^{n-1}}\geq C_5 |\lambda_i|^{-2(n-1)^2}$$ 

 Coming back to \eqref{equa:2.5} we get 
 $$|K_i^*a^i|^2=(K_i K_i^* a^i,a^i) \geqslant \eta_1^i |a^i|^2\geqslant C_5 |\lambda_i|^{-2(n-1)^2} |a^i|^2.$$
 Therefore 
 $$\|(-\mathcal{M})^k K_{*}f\|_{L^2}^2=\sum_{i}\lambda_i^{2k}|K_i^* a^i|^2\geqslant C_5 \sum_{i} |\lambda_i|^{2(k-(n-1)^2)} |a^i|^2\geqslant C \|f\|_{L^2}^2.$$
 
As this is true for all $f$ spanned by a finite amount of the $\mathbf{w}_i$, then we infer that this must also hold for all $f \in L^2((0,1);\R^n)$ such that $(-\mathcal{M})^k K_{*}f \in L^2((0,1);\R^n)$. In particular, we find \eqref{equ:prpo2.2}.
\end{proof}

From now on, we consider $\phi$  with the monomial derivative
$\mathcal{M}^i\partial_t^j \phi \in L^2(0,T;H_a^2(0,1))$  for every $i,j\in\N$, a solution of the following scalar degenerate parabolic equation of order $2n$ in space
\begin{equation}\label{scalar:parabolic}
\begin{cases}
P(\partial_t,\mathcal{M})\phi=0 & \text{in } Q,\\
\mathbf{C}\mathcal{M}^k \phi=0,  \quad  k\geqslant 0, & \text{on } \Sigma,
\end{cases}
\end{equation}
where $P(\partial_t,\mathcal{M})$ is the operator defined  by $\ds P(\partial_t,\mathcal{M})= det (\partial_t I_d+\mathbf{D}^* \mathcal{M}+A^*).$\\

Now we prove all components of every solution of the adjoint system \eqref{adjsyst24} are solutions of the scalar PDE \eqref{scalar:parabolic}.  So, it will be necessary to establish  Carleman  estimate for the scalar PDE \eqref{scalar:parabolic}.\\
First we recall The following result \cite{benfama,A4,FerGonTer}. 
\begin{proposition}\label{pro:3.3}
		Let $z_0\in \mathbb{D}^n$ and let $z=(z_1,\cdots,z_n)^*$ be the corresponding solution of problem \eqref{adjsyst24}. Then, $z \in \mathcal{C}^k([0,T];D(\mathcal{M}^p)^n)$ for every $k,\,p\geqslant 0$, and for every $i$,  $z_i$ solves equation \eqref{scalar:parabolic}. 
\end{proposition}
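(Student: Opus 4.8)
The plan is to reduce the statement to a purely algebraic identity (the cofactor/adjugate relation) applied to the vector equation satisfied by $z$, the only analytic input being the regularity needed to make sense of the high-order space--time derivatives that identity produces.

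First I would record the regularity. Rewriting the terminal-value problem \eqref{adjsyst24} forward in time via $s=T-t$ turns it into $\partial_s \tilde z=(\mathbf{D}^*\mathcal{M}+A^*)\tilde z$ with $\tilde z(0)=z_T$. Since $\mathbf{D}^*\mathcal{M}$ is sectorial by the theorem above (applied to $\mathbf{D}^*$, whose eigenvalues also have positive real part) and $A^*$ is a bounded perturbation, the operator $\mathbf{D}^*\mathcal{M}+A^*$ generates an analytic semigroup. Analyticity gives $\tilde z(s)\in D((\mathbf{D}^*\mathcal{M}+A^*)^p)$ for every $p\geq 0$ and $s>0$; because $\mathbf{D}^*$ is invertible and $A^*$ is bounded, these domains coincide with $D(\mathcal{M}^p)^n$. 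As $z_T\in\mathbb{D}^n=\bigcap_{p}D(\mathcal{M}^p)^n$, smoothness persists up to $s=0$, so $z\in\mathcal{C}^k([0,T];D(\mathcal{M}^p)^n)$ for all $k,p\geq 0$, the time derivatives being controlled by differentiating the equation and using $z_t=-(\mathbf{D}^*\mathcal{M}+A^*)z$ repeatedly. In particular every monomial derivative $\mathcal{M}^i\partial_t^j z$ is a well-defined element of $\mathcal{C}([0,T];L^2(0,1)^n)$, and since $\mathcal{M}^i\partial_t^j z(t)\in D(\mathcal{M})$ carries the boundary condition built into $H_a^2$, we obtain $\mathbf{C}\mathcal{M}^k z=0$ on $\Sigma$ for all $k\geq 0$.

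Next comes the algebraic core. The adjoint system is exactly $Lz=0$, where $L:=\partial_t I_d+\mathbf{D}^*\mathcal{M}+A^*$ is an $n\times n$ matrix whose $(i,j)$ entry is the scalar operator $\delta_{ij}\partial_t+(\mathbf{D}^*)_{ij}\mathcal{M}+(A^*)_{ij}$. Because $\partial_t$ and $\mathcal{M}$ commute on the smooth orbit produced above, all these entries lie in the commutative ring of polynomial differential operators generated by $\partial_t$ and $\mathcal{M}$ with scalar coefficients; the (generally noncommuting) matrices $\mathbf{D}^*$ and $A^*$ enter only through these scalar coefficients. Over such a commutative ring the Cramer/adjugate identity $\mathrm{adj}(L)\,L=\det(L)\,I_d$ holds verbatim, with $\det(L)=P(\partial_t,\mathcal{M})$ by the very definition of $P$. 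Applying the matrix of operators $\mathrm{adj}(L)$ to $Lz=0$ and reading off the $i$-th component yields $P(\partial_t,\mathcal{M})z_i=0$ for each $i$, which is the interior equation of \eqref{scalar:parabolic}; together with the boundary conditions obtained above this shows each $z_i$ solves \eqref{scalar:parabolic}.

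The only delicate point is legitimacy rather than the algebra: expanding $\mathrm{adj}(L)$ and $\det(L)$ produces compositions $\mathcal{M}^i\partial_t^j$ of order up to $2n$ in space and $n$ in time, so the manipulation is meaningful only because the first step guarantees that $z$ possesses all these derivatives continuously in $t$ and in the appropriate domains. This is exactly where the analyticity of the semigroup and the hypothesis $z_0\in\mathbb{D}^n$ are used, and I expect verifying this qualitative regularity (and that powers of $\mathcal{M}$ preserve the boundary conditions $\mathbf{C}\mathcal{M}^k z=0$) to be the main obstacle, the cofactor identity itself being formal.
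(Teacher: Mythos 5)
Your proposal is correct and is essentially the intended argument: the paper does not prove Proposition \ref{pro:3.3} itself but recalls it from \cite{benfama,A4,FerGonTer}, where the proof is exactly your two-step scheme --- qualitative smoothness of $z$ from semigroup theory together with $z_T\in\mathbb{D}^n$ (membership in $D(\mathcal{M}^{k+1})$ giving the boundary conditions $\mathbf{C}\mathcal{M}^k z_i=0$), followed by applying the adjugate of $L=\partial_t I_d+\mathbf{D}^*\mathcal{M}+A^*$ to $Lz=0$ over the commutative ring of polynomials in $\partial_t$ and $\mathcal{M}$, so that $P(\partial_t,\mathcal{M})z_i=\det(L)z_i=0$. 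No gaps worth flagging.
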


 The following proposition is the crucial result in this paper, since it generalize \cite[Lemma 4.1]{FerGonTer} to the case where the Jordan block size exceeds 4.

\begin{proposition}\label{propo3.4}
		for any $k\geq 0$ and $j\geq 0$, we can find an integer $m(k,j)\geq 0$, a constant $C(k,j)>0$ and an open set $\omega(k,j)$ satisfying $\omega\Subset \omega(k,j) \Subset \omega_1$, such that
	
	\begin{equation} \label{equ:propo3.4}
	I(\tau,(-\mathcal{M})^k\partial_t^j\phi) \leqslant C(k,j)\int\!\!\!\!\!\int_{\omega(k,j)\times(0,T)} (s\theta)^{m(k,j)} |\phi|^2 e^{2s\Phi}dxdt 
	\end{equation}
	where $\phi$ satisfies \eqref{scalar:parabolic} and 
	$$ \ds I(\tau,z)=\int\!\!\!\!\!\int_{Q}\Big((s\theta)^{\tau-1} z_t^2 +(s\theta)^{\tau-1} (\mathcal{M}z)^2+(s\theta)^{\tau+1}a(x)z_x^2
	+(s\theta)^{\tau+3}\frac{x^2}{ a(x)}z^2 \Big)e^{2s\varphi} dx dt $$
	
\end{proposition}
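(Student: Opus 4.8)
The plan is to prove \eqref{equ:propo3.4} by reducing an arbitrary derivative $(-\mathcal{M})^k\partial_t^j\phi$ to low spatial order, exploiting the algebraic structure of the scalar operator $P(\partial_t,\mathcal{M})=\det(\partial_t I_d+\mathbf{D}^*\mathcal{M}+A^*)$ together with the degenerate Carleman estimates of Propositions \ref{propo3.6} and \ref{proposition26}. First I would dispose of the time derivatives: since $P(\partial_t,\mathcal{M})$ has constant coefficients, $\partial_t$ commutes with it and preserves the boundary conditions $\mathbf{C}\mathcal{M}^k\phi=0$, so $\partial_t^j\phi$ again solves \eqref{scalar:parabolic}. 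It therefore suffices to establish the estimate for $(-\mathcal{M})^k\phi$ (the case $j=0$) for every solution and then apply it to $\partial_t^j\phi$; the resulting observation $\int\!\!\int_{\omega(k,0)\times(0,T)}(s\theta)^m|\partial_t^j\phi|^2e^{2s\Phi}$ is turned back into an observation of $|\phi|^2$ on a slightly larger set by a Caccioppoli-type inequality in time (integrating by parts in $t$ against a cutoff and absorbing the derivatives of $\theta$ into higher powers of $s\theta$). This is what fixes the dependence of $m(k,j)$ and $\omega(k,j)$ on $j$.

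Next I would induct on the spatial order $k$. Writing the equation \eqref{scalar:parabolic} with the leading term in $\mathcal{M}$ equal to $\det(\mathbf{D}^*)\mathcal{M}^n$ (and $\det(\mathbf{D}^*)\neq0$, since $\mathbf{D}$ has eigenvalues with positive real part), I can solve for the top-order term, expressing $(-\mathcal{M})^n\phi$ as a linear combination of terms $(-\mathcal{M})^b\partial_t^a\phi$ with $b\le n-1$. Consequently, for $k\ge n$ the quantity $(-\mathcal{M})^k\phi$ is a combination of such terms of strictly smaller spatial order $b'\le k-1$; since $z\mapsto I(\tau,z)$ is a finite sum of squared weighted seminorms of $z$, the quantity $I(\tau,(-\mathcal{M})^k\phi)$ is dominated by the sum of the corresponding functionals of strictly smaller spatial order, handled by the induction hypothesis (again absorbing the extra time derivatives as above). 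This reduces the whole statement to the finitely many base cases $0\le k\le n-1$.

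For these base cases I would apply the global degenerate Carleman estimate obtained by combining \eqref{eqproposition23} and \eqref{eqproposition26} (via the cutoffs $\xi$ and $\varsigma=1-\xi$) to $u_k:=(-\mathcal{M})^k\phi$, viewed as a solution of the heat equation $\partial_t u_k-\mathcal{M}u_k=g_k$ with $g_k=(-\mathcal{M})^k\partial_t\phi+(-\mathcal{M})^{k+1}\phi$; the local term produced by the estimate is then converted, by Caccioppoli-type inequalities in space on the nested regions $\omega\Subset\omega(k,j)\Subset\omega_1$, into the desired observation of $|\phi|^2$. The delicate point is that $g_k$ contains $(-\mathcal{M})^{k+1}\phi$, whose spatial order exceeds that of $u_k$, so a direct estimate is circular. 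I would break this circularity by invoking the equation once more — rewriting every order-$n$ contribution that arises in the sources through $(-\mathcal{M})^n\phi=\det(\mathbf{D}^*)^{-1}(\cdots)$ into lower-order terms — and by exploiting that the Carleman estimates hold for \emph{every} $\tau\in\mathbb{R}$, so that each reduction step can be made to lower the weight exponent by a fixed amount while enlarging the observation region only slightly, yielding a solvable coupled system of the $n$ base-case inequalities.

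The hard part, and the place where the argument improves on \cite[Lemma 4.1]{FerGonTer}, will be closing this iteration for arbitrarily large Jordan blocks. The size of the largest block of $\mathbf{D}$ is precisely the multiplicity with which a factor $\partial_t+\bar{d_i}\mathcal{M}$ occurs in the principal symbol of $P$, i.e. the depth of the cascade one must traverse; \cite{FerGonTer} could iterate the Carleman estimate only a bounded number of times (block size $\le 4$). Arranging the induction on $k$ so that the number of iterations is unrestricted, while keeping explicit track of the integer $m(k,j)$, of the accumulated powers of $s\theta$, and of the nested sets $\omega(k,j)$, and checking that every higher-order source is ultimately dominated by a single observation of $|\phi|^2$, is the crux and where the genuine work lies.
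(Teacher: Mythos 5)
Your proposal does not close the central estimate; it defers exactly the point that the paper's proof is built to overcome, and the device you offer in its place cannot work. In the base cases $0\le k\le n-1$ you view $u_k=(-\mathcal{M})^k\phi$ as a solution of $\partial_t u_k-\mathcal{M}u_k=g_k$ with $g_k=(-\mathcal{M})^k\partial_t\phi+(-\mathcal{M})^{k+1}\phi$. But this is an algebraic identity valid for \emph{any} function --- it does not use the PDE \eqref{scalar:parabolic} at all --- and feeding it into Proposition \ref{propo3.6} is hopeless for a structural reason: the source enters the right-hand side of \eqref{eqproposition23} with weight $(s\theta)^{\tau}$, whereas $u_t^2$ and $(\mathcal{M}u)^2$ occur on the left-hand side only with weight $(s\theta)^{\tau-1}$. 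Since $s\theta$ is large, the right-hand side always dominates those left-hand terms, so nothing can ever be absorbed, for any choice of $\tau$ and however often you ``invoke the equation'': the identity $(-\mathcal{M})^n\phi=\det(\mathbf{D}^*)^{-1}(\cdots)$ lowers the order only at level $n$ and does nothing for the sources $(-\mathcal{M})^{k+1}\phi$ with $k+1<n$, nor for the term $(-\mathcal{M})^k\partial_t\phi$, whose estimate in turn drags in $\partial_t^2$, $\partial_t^3,\dots$, so the coupled system of inequalities you invoke has infinite dependency chains and never closes. You yourself concede that closing this iteration for arbitrarily large Jordan blocks ``is the crux and where the genuine work lies''; that crux \emph{is} the proposition, so what you have is a plan, not a proof. (Your treatment of time derivatives has the same defect: a local observation of $|\partial_t^j\phi|^2$ cannot be turned into one of $|\phi|^2$ by integrating by parts in $t$ against a cutoff --- that produces terms like $\int\phi\,\partial_t^{2j}\phi$ and does not close without using the equation.)

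The mechanism that actually works, and that is absent from your argument, is a different decomposition. The paper block-decomposes $\partial_tI_d+\mathbf{D}^*\mathcal{M}+A^*$ along the Jordan structure \eqref{jordan:block} and factors each diagonal determinant into the \emph{commuting first-order operators} $\partial_t+\alpha\mathcal{M}+\eta_i$, rewriting \eqref{scalar:parabolic} as the nested cascade systems \eqref{equ:3.11} and \eqref{equ:3.13}. The Carleman estimate is applied to each cascade equation with the exponent $\tau$ shifted by $3$ per level, so the source at level $j$ (namely $\zeta_{j-1}$) is absorbed by the zero-order term of the level-$(j-1)$ estimate, which carries three extra powers of $s\theta$; the off-diagonal remainders $F(\phi)$ and $G(\psi_p)$, which contain the problematic mixed derivatives, are absorbed by enriching the left-hand side in two ways: applying powers of $-\mathcal{M}$ to the cascade equations, and summing the resulting estimates over \emph{all permutations} $\sigma$ of the commuting factors. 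This permutation/enrichment step is precisely what removes the bound (block size $\le 4$) of \cite{FerGonTer}, i.e.\ it is the heart of the result. Finally, the induction on $k,j$ converts local observations of $\mathcal{M}\phi$ or $\partial_t\phi$ into observations of $\phi$ not by a Caccioppoli inequality but by routing through the global functional: the local weighted integral of $|\mathcal{M}\phi|^2$ is dominated by $I(m+1,\phi)$, to which the already-proven case $k=j=0$ is applied once more.
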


\begin{proof}
		We will prove \eqref{equ:propo3.4} by induction on $k$ and $j$ in two steps.\\
	{\it {\bf Step 1 : } Proof of \eqref{equ:propo3.4} for $k=j=0$}\\
	Let us see that, if s large enough, one has
	 	\begin{equation} \label{equ:propo3.400}
	 I(\tau,\phi) \leqslant C(0,0)\int\!\!\!\!\!\int_{\omega(0,0)\times(0,T)} (s\theta)^{m(0,0)} |\phi|^2 e^{2s\Phi}dxdt 
	 \end{equation}
for some $m(0,0),\,\, C(0,0)$ and  $\omega(0,0)$.\\
We assume that $\mathbf{D}$ satisfies the assumptions \eqref{cond1}-\eqref{jordan:block}, then we have for some $p\geq 1$	 
 	 
\begin{equation}
I_d\partial_t+\mathbf{D}^*\mathcal{M}+A^*= \begin{bmatrix} 
H_1(\partial_t,\mathcal{M}) & A^*_{2 1} &\cdots & A^*_{p 1} \\
A^*_{1 2}& H_2(\partial_t,\mathcal{M})&\cdots  & A^*_{p 2} \\
\vdots&\vdots & \ddots& \vdots\\
A^*_{1 p}&A^*_{2 p} &\cdots & H_p(\partial_t,\mathcal{M}) 
\end{bmatrix}
\end{equation}

where $H_i(\partial_t,\mathcal{M})$ is the non-scalar operator $H_i(\partial_t,\mathcal{M}):=I_d\partial_t+J_i^*\mathcal{M}+A_{i i}^*$,  the  $J_i^*$ are Jordan blocks,
i.e. each of them is of the form \eqref{jordan:block} for some $d_i\in \C$ and the $A_{i j}$ provide the corresponding block decomposition of $A$. Thus we can write \eqref{scalar:parabolic} as follow 
\begin{equation}\label{equ:3.10}
	\prod_{i=1}^{p}\mathrm{det}  H_i(\partial_t,\mathcal{M})\phi=F(\phi)
\end{equation}
in the term $F(\phi)$ we find the composition of at most $p-2$ operators of kind $\mathrm{det}  H_i(\partial_t,\mathcal{M})$ applied to $\phi$. Let us define  the functions $\psi_i$ by 
$$\psi_1=\phi,\,\psi_2=\mathrm{det}  H_1(\partial_t,\mathcal{M})\psi_1,\cdots,\psi_p=\mathrm{det}  H_{p-1}(\partial_t,\mathcal{M})\psi_{p-1}.$$
Thus the equation \eqref{equ:3.10} can be written as 
\begin{equation}\label{equ:3.11}
	\begin{cases}
	\mathrm{det}  H_{p}(\partial_t,\mathcal{M})\psi_{p}=F(\phi)\\
	\mathrm{det}  H_{p-1}(\partial_t,\mathcal{M})\psi_{p-1}=\psi_p\\
	\cdots\quad\,\cdots\quad\,\cdots \\
	\mathrm{det}  H_1(\partial_t,\mathcal{M})\psi_1=\psi_2,
	\end{cases}
\end{equation}
 by hypothesis, we  have 
\begin{equation*}
	\mathbf{C}\phi=\mathbf{C}\psi_2=\cdots=\mathbf{C}\psi_p=0 \,\, \text{   on }\,\,\Sigma
\end{equation*}
Let us consider the first PDE of \eqref{equ:3.11}, assume that $J_p$ is a Jordan block of dimension $r$ associated to the complex eigenvalue $\alpha$ with $\mathrm{Re }(\alpha)>0$  and let denote by $\eta_1,\cdots,\eta_r$ the diagonal components of $A_{p p}$. Then this PDE can be rewritten as 
\begin{equation}\label{equ:3.12}
	 	\prod_{i=1}^{r}  \big(\partial_t+\alpha\mathcal{M}+\eta_i\big)\psi_p=F(\phi)-G(\psi_p),
\end{equation}
where $G(\psi_p)$ is a linear combination of partial derivatives of $\psi_p$.\\
Again, let us introduce the new variables 
$$\zeta_{r}=\psi_p,\,\zeta_{r-1}=(\partial_t+\alpha\mathcal{M}+\eta_r)\zeta_{r},\cdots,\zeta_{1}=(\partial_t+\alpha\mathcal{M}+\eta_2)\zeta_{2}.$$

Therefore, we can rewrite \eqref{equ:3.11}  as a first-order system for the $\zeta_i$ :  
\begin{equation}\label{equ:3.13}
\begin{cases}
(\partial_t+\alpha\mathcal{M}+\eta_1)\zeta_{1}=F(\phi)-G(\psi_p)\\
(\partial_t+\alpha\mathcal{M}+\eta_2)\zeta_{2}=\zeta_{1}\\
\cdots\quad\,\cdots\quad\,\cdots \\
(\partial_t+\alpha\mathcal{M}+\eta_r)\zeta_{r}=\zeta_{r-1}.
\end{cases}
\end{equation}
with \begin{equation*}
\mathbf{C}\zeta_{1}=\mathbf{C}\zeta_{2}=\cdots=\mathbf{C}\zeta_{r}=0 \,\, \text{   on }\,\,\Sigma.
\end{equation*}
 
Notice that $|G(\psi_p)|^2$ is bounded by a sum of squares of derivatives of $\psi_p$. More precisely, we have $|G(\psi_p)|^2\leq C I_{G}(\psi_p)$, with 

    \begin{equation}\label{equ:3.17}
	I_{G}(\psi_p):= \sum_{\ell=0}^{r-1}\sum_{j_1,\cdots,j_{\ell}=0}^{r}
	\sum_{b=0}^{r-(\ell+1)}|(-\mathcal{M})^{b}
	\prod_{i=1}^{\ell}(\partial_t+\alpha\mathcal{M}+\eta_{j_i})\psi_p|^2
	\end{equation}

 The following $I_{\alpha,\mathbf{A}}(\tau,z)$ term already used in \cite{benfama} is defined by  
			\small{
				$$\ds I_{\alpha,\mathbf{A}}(\tau,z)=\int\!\!\!\!\!\int_{Q}\Big(s^{\tau-1}\theta^{\tau-1}\alpha^2 z_t^2 +s^{\tau-1}\theta^{\tau-1} \alpha^2 (\mathcal{M}z)^2 +s^{\tau+1}\theta^{\tau+1}a(x)\alpha^2 z_x^2
				+s^{\tau+3}\theta^{\tau+3}\frac{x^2}{ a(x)}\alpha^2 z^2 \Big)e^{2s\mathbf{A}} dx dt$$}
where $\mathbf{A}\in \{\varphi,\Phi\}$ and $\alpha\in\{\xi,\varsigma\}$ used in  Proposition \ref{propo3.6} and  Proposition \ref{proposition26}.\\
Applying Carleman estimates \eqref{eqproposition23} established in Proposition \ref{propo3.6} to the first PDE in \eqref{equ:3.13}, we get

\begin{equation*}
I_{\xi,\varphi}(\tau,\zeta_1)\leqslant C\Big(\int\!\!\!\!\!\int_{Q} \xi^2 (s\theta)^{\tau}(|F(\phi)|^2(t,x)+ I_{G}(\psi_p))e^{2s\varphi}dx dt+\int\!\!\!\!\!\int_{Q_{\omega'}} (s\theta)^{2+\tau} \zeta_1^2 e^{2s\varphi} dx dt\Big)
\end{equation*}

And for the $j^{\text{th}}$ PDE in \eqref{equ:3.13}  where $j=2,\cdots,r$, we have 

\begin{equation*}
I_{\xi,\varphi}(\tau+3(j-1),\zeta_j)\leqslant C\Big(\int\!\!\!\!\!\int_{Q} \xi^2 (s\theta)^{\tau+3(j-1)}|\zeta_{j-1}|^2 e^{2s\varphi}dx dt+\int\!\!\!\!\!\int_{Q_{\omega'}} (s\theta)^{\tau+3(j-1)+2} |\zeta_j|^2 e^{2s\varphi} dx dt\Big)
\end{equation*}
Consequently, an appropriate linear combination of the terms in the left hand sides absorbes the global
weighted integrals of $\ds |\zeta_j|^2$ for $j=2,\cdots,r$.

\noindent
\begin{multline*}
	 \ds \sum_{j=1}^{r} I_{\xi,\varphi}(\tau+3(j-1),\zeta_j)\leqslant C\Big( \sum_{j=1}^{r}\int\!\!\!\!\!\int_{Q_{\omega'}} (s\theta)^{\tau+3(j-1)+2} |\zeta_j|^2 e^{2s\varphi} dx dt \Big)\\
	 + C\Big(\int\!\!\!\!\!\int_{Q} \xi^2 (s\theta)^{\tau}(|F(\phi)|^2(t,x)+ I_{G}(\psi_p))e^{2s\varphi}dx dt \Big)\numberthis\label{equ:3.16}
\end{multline*}

The next task will be to add some extra terms on the left hand side of the previous inequality. To this end,
we reason as follow.  We apply $-\mathcal{M} $ to the $j^{\text{th}}$ PDE in \eqref{equ:3.13}  where $j=2,\cdots,r$, we have 
$$ -(\partial_t+\alpha\mathcal{M}+\eta_j)\mathcal{M}\zeta_{j}=-\mathcal{M}\zeta_{j-1}. $$
\vspace*{.5cm}

Using Proposition \ref{propo3.6}, we get 
\begin{align*}
\ds I_{\xi,\varphi}(\tau+3(j-2)-1,\mathcal{M}\zeta_{j})	&\leqslant C\Big(\int\!\!\!\!\!\int_{Q} \xi^2 (s\theta)^{\tau+3(j-2)-1}(\mathcal{M}\zeta_{j-1})^2e^{2s\varphi(t,x)}dx dt\\
& \qquad +\int\!\!\!\!\!\int_{Q_{\omega'}} (s\theta)^{\tau+3(j-2)+2} (\mathcal{M}\zeta_{j})^2 e^{2s\varphi(t,x)}dxdt\Big) \\
&\leqslant C I_{\xi,\varphi}(\tau+3(j-2),\zeta_{j-1})+  C I_{\xi,\varphi}(\tau++3(j-1),\zeta_j).
\end{align*}

Then, we can add all these new terms to the left hand side of \eqref{equ:3.16} and, for a new positive constant C, obtain

\begin{multline*}
\ds \sum_{j=1}^{r} I_{\xi,\varphi}(\tau+3(j-1),\zeta_j)+ \sum_{j=2}^{r} I_{\xi,\varphi}(\tau+3(j-2)-1,\mathcal{M}\zeta_j)\\
\leqslant C\Big( \sum_{j=1}^{r}\int\!\!\!\!\!\int_{Q_{\omega'}} (s\theta)^{\tau+3(j-1)+2} |\zeta_j|^2 e^{2s\varphi} dx dt \Big)
+ C\Big(\int\!\!\!\!\!\int_{Q} \xi^2 (s\theta)^{\tau}(|F(\phi)|^2(t,x)+ I_{G}(\psi_p))e^{2s\varphi}dx dt \Big)\numberthis\label{equ:3.20bis}
\end{multline*}

We can continue the previous process and add better global terms on the left hand side of \eqref{equ:3.20bis}. Thus, if we apply $(-\mathcal{M})^2$ to  the $j^{\text{th}}$ PDE in \eqref{equ:3.13}  where $j=k+1,\cdots,r$ and $k=2,\cdots,r-1$, we have  
$$(\partial_t+\alpha\mathcal{M}+\eta_j)(-\mathcal{M})^k\zeta_{j}=(-\mathcal{M})^k\zeta_{j-1}$$
 we use again Proposition \ref{propo3.6}  for the previous equations 
\begin{align*}
I_{\xi,\varphi}(\tau+3(j-(k+1))-k,(-\mathcal{M})^k\zeta_{j})	&\leqslant C\Big(\int\!\!\!\!\!\int_{Q} \xi^2 (s\theta)^{\tau+3(j-(k+1))-k}|(-\mathcal{M})^k\zeta_{j-1}|^2 e^{2s\varphi(t,x)}dx dt\\
&\qquad +\int\!\!\!\!\!\int_{Q_{\omega'}} (s\theta)^{\tau+3(j-(k+1))-k+2} |(-\mathcal{M})^k\zeta_{j}|^2 e^{2s\varphi(t,x)}dxdt\Big) \\
&\leqslant C I_{\xi,\varphi}(\tau+3(j-(k+1))-k+1,(-\mathcal{M})^{k-1}\zeta_{j-1})\\
&\qquad+  C I_{\xi,\varphi}(\tau+3(j-(k+1))-k+2,(-\mathcal{M})^{k-1}\zeta_{j})\\
&\leqslant C I_{\xi,\varphi}(\tau+3((j-1)-k)-(k-1),(-\mathcal{M})^{k-1}\zeta_{j-1})\\
&\qquad+  C I_{\xi,\varphi}(\tau+3(j-k)-(k-1),(-\mathcal{M})^{k-1}\zeta_{j}).\numberthis\label{equa:3.20}
\end{align*}

 Let us denote by $\ds \mathcal{J}_{\xi,\varphi}(\tau,\zeta)$ the following  sum 
\begin{equation}
\ds \mathcal{J}_{\xi,\varphi}(\tau,\zeta) =\sum_{k=0}^{r-1}\sum_{j=k+1}^{r}I_{\xi,\varphi}(\tau+3(j-(k+1))-k,(-\mathcal{M})^k\zeta_{j})	 
\end{equation}
where $\zeta=(\zeta_1,\cdots,\zeta_r)$.
Then, we can add all these new terms \eqref{equa:3.20} to the left hand side of \eqref{equ:3.20bis} and, for a new positive constant C, obtain

\begin{multline*}
\ds \mathcal{J}_{\xi,\varphi}(\tau,\zeta)
\leqslant C\Big( \sum_{j=1}^{r}\int\!\!\!\!\!\int_{Q_{\omega'}} (s\theta)^{\tau+3(j-1)+2} |\zeta_j|^2 e^{2s\varphi} dx dt \Big)
+ C\Big(\int\!\!\!\!\!\int_{Q} \xi^2 (s\theta)^{\tau}(|F(\phi)|^2(t,x)+ I_{G}(\psi_p))e^{2s\varphi}dx dt \Big)\numberthis\label{equa:3.22}
\end{multline*}
with $s$ sufficiently large.\\
From now on, we fix $s$ sufficiently large and we  try to replace the local terms in \eqref{equa:3.22} corresponding to $\zeta_1,\cdots,\zeta_{r-1}$ by a term of the form ($\psi_p=\zeta_r$) using the    same computation \cite[Lemma 3.7]{FadiliManiar}, we can show the existence of a constant $C>0$ and an integer $\ell_1$ such that :
{\small
	\begin{equation}\label{equ:3.21}
	\ds \mathcal{J}_{\xi,\varphi}(\tau,\zeta) \leqslant C \int\!\!\!\!\!\int_{Q_{\omega'}} s^{\ell_1}\theta^{\ell_1} \psi_p^2 e^{2s\varphi}dxdt  
	+ C\int\!\!\!\!\!\int_{Q} \xi^2s^\tau \theta^{\tau}(|F(\phi)|^2+ I_{G}(\psi_p))e^{2s\varphi}dx dt. 
	\end{equation}
}

Since the operators $(\partial_t+\alpha\mathcal{M}+\eta_j),\,\, j=1,\cdots,r$  commute, we see that \eqref{equ:3.12}  can be rewritten equivalently in the form
\begin{equation*}
\prod_{i=1}^{r}  \big(\partial_t+\alpha\mathcal{M}+\eta_{\sigma(i)}\big)\psi_p=F(\phi)-G(\psi_p),
\end{equation*}

where $\sigma$ is any permutation in $\mathcal{P}_r$. Hence, we can introduce the new variables
$$\zeta_{r}^{\sigma}=\psi_p,\,\zeta_{r-1}^{\sigma}=(\partial_t+\alpha\mathcal{M}+\eta_{\sigma(r)})\zeta_{r}^{\sigma},\cdots,\zeta_{1}^{\sigma}=(\partial_t+\alpha\mathcal{M}+\eta_{\sigma(2)})\zeta_{2}^{\sigma}.$$

and we can also rewrite \eqref{equ:3.12}  as a first-order system for the $\zeta_i^{\sigma}$ : 

\begin{equation}\label{equ:3.13bis}
\begin{cases}
(\partial_t+\alpha\mathcal{M}+\eta_{\sigma(1)})\zeta_{1}^{\sigma}=F(\phi)-G(\psi_p)\\
(\partial_t+\alpha\mathcal{M}+\eta_{\sigma(2)})\zeta_{2}^{\sigma}=\zeta_{1}^{\sigma}\\
\cdots\quad\,\cdots\quad\,\cdots \\
(\partial_t+\alpha\mathcal{M}+\eta_{\sigma(r)})\zeta_{r}^{\sigma}=\zeta_{r-1}^{\sigma}.
\end{cases}
\end{equation}

Again, with \begin{equation*}
\mathbf{C}\zeta_{1}^{\sigma}=\mathbf{C}\zeta_{2}^{\sigma}=\cdots=\mathbf{C}\zeta_{r}^{\sigma}=0 \,\, \text{   on }\,\,\Sigma.
\end{equation*}

As before, we obtain an estimate like \eqref{equ:3.21} 
{\small
	\begin{equation*}
	\ds \mathcal{J}_{\xi,\varphi}(\tau,\zeta^{\sigma}) \leqslant C \int\!\!\!\!\!\int_{Q_{\omega'}} s^{\ell_1}\theta^{\ell_1} \psi_p^2 e^{2s\varphi}dxdt  
	+ C\int\!\!\!\!\!\int_{Q} \xi^2s^\tau \theta^{\tau}(|F(\phi)|^2+ I_{G}(\psi_p))e^{2s\varphi}dx dt. 
	\end{equation*}
}
In this inequality we have used the notation $\zeta^{\sigma}=(\zeta^{\sigma}_1,\cdots,\zeta^{\sigma}_r) $.
Now, let us define  $\ds \mathcal{I}_{\xi,\varphi}(\tau,\zeta)$ by 
$$\ds \mathcal{I}_{\xi,\varphi}(\tau,\psi_p)=\sum_{\sigma\in\mathcal{P}_r}\ds \mathcal{J}_{\xi,\varphi}(\tau,\zeta^{\sigma})$$
we have

{\small
	\begin{equation*}
	\ds \mathcal{I}_{\xi,\varphi}(\tau,\psi_p) \leqslant C \int\!\!\!\!\!\int_{Q_{\omega'}} s^{\ell_1}\theta^{\ell_1} \psi_p^2 e^{2s\varphi}dxdt  
	+ C\int\!\!\!\!\!\int_{Q} \xi^2s^\tau \theta^{\tau}(|F(\phi)|^2+ I_{G}(\psi_p))e^{2s\varphi}dx dt. 
	\end{equation*}
}

Observe that all the terms in $\ds I_{G}(\psi_p)$ are also in the left multiplied by weights of the form $\ds s^{\kappa} \theta^{\kappa}e^{2s\varphi}$ with $\kappa>0$. Consequently, for sufficiently large $s$, these terms are absorbed and we find  
{\small
	\begin{equation}\label{equ:3.25}
	\ds \mathcal{I}_{\xi,\varphi}(\tau,\psi_p) \leqslant C \int\!\!\!\!\!\int_{Q_{\omega'}} s^{\ell_1}\theta^{\ell_1} \psi_p^2 e^{2s\varphi}dxdt  
	+ C\int\!\!\!\!\!\int_{Q} \xi^2s^\tau \theta^{\tau}|F(\phi)|^2e^{2s\varphi}dx dt. 
	\end{equation}
}

Let us now consider the second PDE in \eqref{equ:3.11}.  Arguing in the same way, we deduce
the following estimate for $\psi_{p-1}$ :

{\small
	\begin{equation}\label{equ:3.27}
	\ds \mathcal{I}_{\xi,\varphi}(\tau,\psi_{p-1}) \leqslant C \int\!\!\!\!\!\int_{Q_{\omega'}} s^{\ell_2}\theta^{\ell_2} |\psi_{p-1}|^2 e^{2s\varphi}dxdt  
	+ C\int\!\!\!\!\!\int_{Q} \xi^2s^\tau \theta^{\tau}|\psi_{p}|^2e^{2s\varphi}dx dt. 
	\end{equation}
}

The corresponding similar estimate also holds for $\psi_{p-1}$, {\it etc}. Thus, after addition and taking into account  that $\psi_{1}=\phi$ and the global integrals of $\psi_{p},\cdots,\psi_{2}$ in the right hand side are smaller than the terms in the left, we get an estimate for all the $\psi_{i}$ :

{\small
	\begin{multline}\label{equ:3.28}
	\sum_{i=1}^{p} \mathcal{I}_{\xi,\varphi}(\tau,\psi_{i}) \leqslant C \Big(\int\!\!\!\!\!\int_{Q} \xi^2s^\tau \theta^{\tau}|F(\phi)|^2e^{2s\varphi}dx dt \\
	+\int\!\!\!\!\!\int_{Q_{\omega'}} (s\theta)^{\ell_2} |\phi|^2 e^{2s\varphi}dxdt  
	+ \sum_{i=2}^{p}\int\!\!\!\!\!\int_{Q_{\omega'}} (s\theta)^{\ell_2}|\psi_{i}|^2e^{2s\varphi}dx dt\Big). 
	\end{multline}
}

Again, using the cascade structure of system \eqref{equ:3.11}, all the local integrals in the right can be absorbed by the left hand side, with the exception of the local weighted integral of $|\phi|^2$.  All we have to do is to enlarge the open set ω1 and argue like in the passage from \eqref{equa:3.22} to \eqref{equ:3.21}. Therefore, the following is obtained:

{\small
	\begin{equation*}\label{equ:3.29}
	\sum_{i=1}^{p} \mathcal{I}_{\xi,\varphi}(\tau,\psi_{i}) \leqslant C \Big(\int\!\!\!\!\!\int_{Q} \xi^2s^\tau \theta^{\tau}|F(\phi)|^2e^{2s\varphi}dx dt 
	+\int\!\!\!\!\!\int_{Q_{\omega'}} (s\theta)^{\ell_2} |\phi|^2 e^{2s\varphi}dxdt  
	\Big). 
	\end{equation*}
}

 we see that, taking into account that the operators $\mathrm{det}  H_i(\partial_t,\mathcal{M})$, commute for $i=1,\cdots,p$. Then we can rewrite  \eqref{equ:3.10} in the form
  
 \begin{equation*}
 \prod_{i=1}^{p}\mathrm{det}  H_{\sigma(i)}(\partial_t,\mathcal{M})\phi=F(\phi)
 \end{equation*}
  where $\sigma$  is any permutation in $\mathcal{P}_n$. Then we have the following  equivalent formulation of \eqref{scalar:parabolic}   
 \begin{equation*}
 \begin{cases}
 \mathrm{det}  H_{\sigma(p)}(\partial_t,\mathcal{M})\psi_{p}^{\sigma}=F(\phi)\\
 \mathrm{det}  H_{\sigma(p-1)}(\partial_t,\mathcal{M})\psi_{p-1}^{\sigma}=\psi_p^{\sigma}\\
 \cdots\quad\,\cdots\quad\,\cdots \\
 \mathrm{det}  H_{\sigma(1)}(\partial_t,\mathcal{M})\psi_1^{\sigma}=\psi_2^{\sigma}
 \end{cases}
 \end{equation*}
thus, we can also get an estimate of the same form where, now, we have in the left global weighted integrals of
$\phi,\,\psi_{2}^{\sigma},\cdots,\psi_{p}^{\sigma}$. Taking into account that $ F(\phi)$ is a sum of terms where, at most, $p-2$  operators of the kind $\mathrm{det}  H_{j}(\partial_t,\mathcal{M}) $ are applied to $\phi$. Since $\sigma$  is arbitrary in $\mathcal{P}_n$, using all these possible estimates together and arguing as above, it becomes also clear that the terms containing $ |F(\phi)|^2$ can be controlled by the terms in the left. This gives 

{\small
	\begin{equation}\label{equ:3.29}
	\sum_{\sigma\in\mathcal{P}_n}\sum_{i=1}^{p} \mathcal{I}_{\xi,\varphi}(\tau,\psi_{i}^{\sigma}) \leqslant C \int\!\!\!\!\!\int_{Q_{\omega'}} (s\theta)^{\ell_2} |\phi|^2 e^{2s\varphi}dxdt.  
\end{equation}
}
 Likewise,  by applying Proposition \ref{proposition26} we infer
{\small
	\begin{equation}\label{equ:3.30}
	\sum_{\sigma\in\mathcal{P}_n}\sum_{i=1}^{p} \mathcal{I}_{\varsigma,\Phi}(\tau,\psi_{i}^{\sigma}) \leqslant C \int\!\!\!\!\!\int_{Q_{\omega'}} (s\theta)^{\ell_3} |\phi|^2 e^{2s\Phi}dxdt.  
	\end{equation}
}
From \eqref{equ:3.29} and \eqref{equ:3.30} we deduce 
{\small
	\begin{equation}\label{equ:3.30}
	\sum_{\sigma\in\mathcal{P}_n}\sum_{i=1}^{p} \mathcal{I}(\tau,\psi_{i}^{\sigma}) \leqslant C \int\!\!\!\!\!\int_{Q_{\omega'}} (s\theta)^{\tilde{\ell}} |\phi|^2 e^{2s\Phi}dxdt.  
	\end{equation}
}
where $\tilde{\ell}=\max(\ell_2,\ell_3)$. This proves \eqref{equ:propo3.400}.\\
{\it {\bf Step 2 : } Induction on $k$ and $j$. }\\
Let us now assume that \eqref{equ:propo3.4} is true for any $k'=0,1,\cdots,k$  any $j'=0,1,\cdots,j$ and any solution to \eqref{scalar:parabolic}  and let us prove \eqref{equ:propo3.4}  (for instance) with $k$ replaced by $k+1$; the proof with the same $k$ and $j$ replaced by $j+1$ is essentially the same.

Since $\hat{\phi}:=(-\mathcal{M})\phi$  also satisfies \eqref{scalar:parabolic}, we have by hypothesis

\begin{align*} 
I(\tau,(-\mathcal{M})^{k+1}\partial_t^j\phi) & = I(\tau,(-\mathcal{M})^k\partial_t^j\hat{\phi})\\
&  \leqslant C(k,j)\int\!\!\!\!\!\int_{\omega(k,j)\times(0,T)} (s\theta)^{m(k,j)} |\hat{\phi}|^2 e^{2s\Phi}dxdt\\
&\leqslant C(k,j)\int\!\!\!\!\!\int_{\omega(k,j)\times(0,T)} (s\theta)^{m(k,j)} |\mathcal{M}\phi|^2 e^{2s\Phi}dxdt\\
&\leqslant C(k,j)C'\int\!\!\!\!\!\int_{\omega(k,j)\times(0,T)} (s\theta)^{m(k,j)} |\mathcal{M}\phi|^2 e^{2s\varphi}dxdt\\
& \leqslant C(k,j)C' I(m(k,j)+1,\phi)\\
&\leqslant C'' \int\!\!\!\!\!\int_{\mathcal{O}'\times(0,T)} (s\theta)^{m'} |\phi|^2 e^{2s\Phi}dxdt.
\end{align*}
Thus,  that there exist $m(k+ 1,j)$, $C(k+ 1,j)$ and $\omega(k+1,j)$ such that
\begin{align*} 
I(\tau,(-\mathcal{M})^{k+1}\partial_t^j\phi)\leqslant C(k+1,j)\int\!\!\!\!\!\int_{\omega(k+1,j)\times(0,T)} (s\theta)^{m(k+1,j)} |\mathcal{M}\phi|^2 e^{2s\Phi}dxdt
\end{align*}
This ends the proof.
\end{proof}


\section{Proof of the main result}
\begin{proof}[Proof of  Theorem \ref{main result}]
	Let us first assume that \eqref{syst24}  is null-controllable. If we have $\mathrm{rank}[\lambda_i\mathbf{D}+A|B]\leq n-1 $ for some $i$, then the associated ordinary differential system is not null-controllable. This means there exists $z_T\in\R^n\setminus\{0\}$ such that the solution to the Cauchy problem
	 \begin{equation}
	\left\lbrace \begin{array}{lll}
	-\partial_t y+(\lambda_i\mathbf{D}^* +A^*) y =0 & in & Q, \\
	y(T)=y_T   & on & (0,T), 
	\end{array}
	\right.
	\end{equation} 
	satisfies $$B^*y(t)=0.$$
		If we now set $\phi_T=y_T \mathbf{w}_i$, where $\mathbf{w}_i$  is an eigenfunction associated to $\lambda_i$, we see that the corresponding solution to the adjoint system \eqref{adjsyst24} cannot satisfy the observability inequality \eqref{observ:inequality}.
Consequently, \eqref{condiequivalente} must hold.\\	
Conversely, let us assume that  \eqref{condiequivalente} is satisfied, and let us prove that the system \eqref{syst24} is null-controllable.
 
Let $z$  be the solution to the adjoint system \eqref{adjsyst24} corresponding to a final data $z_T$,
by Proposition \ref{prpo2.2}, for $k\geqslant (n-1)^2$ there exists a positive constant $C$ depends on $n$, $\mathbf{D}$ and $A$ such that 
\begin{equation}\label{equ:4.2}
\int_{0}^{1}|z(x,t)|^2 dx\leqslant C \int_{0}^{1}|(-\mathcal{M})^{k}(K_{*}z)(x,t)|^2 dx
\end{equation}
for any $t\in [0,T)$. From \eqref{operteurdekalman} the components of $K_{*}z$ are appropriate linear combinations of the components of $z$ and their second-order in space derivatives.
Notice again that, for all $t\in [0,T)$,  $z(\cdot,t)$  is regular enough to give a sense to $(-\mathcal{M})^{k}(K_{*}z)$, which belongs to $L^2(0,1)$.

By Proposition \ref{pro:3.3}  $z \in \mathcal{C}^k([0,T];D(\mathcal{M}^p)^n)$ for every $k,\,p\geqslant 0$ and for every $i$  the component $z_i$ of $z$ solves equation \eqref{scalar:parabolic}. Thus, we can write \eqref{equ:propo3.4}  for any component of $B^*z$. This gives the following inequality for all $j,k\geq 0$ and all $ \ell=1,\cdots,m$

\begin{align*} 
\int\!\!\!\!\!\int_{Q} |(B^*((-\mathcal{M})^k\partial_t^j z))_{\ell}|^2 e^{2s\varphi}dxdt
&=\int\!\!\!\!\!\int_{Q}  |(-\mathcal{M})^k\partial_t^j(B^* z)_{\ell}|^2 e^{2s\varphi}dxdt\\
&\leqslant C(k,j)\int\!\!\!\!\!\int_{\omega(k,j)\times(0,T)}  |(B^* z)_{\ell}|^2 e^{2s\Phi}dxdt 
\end{align*}
let 
$\ds M_0=\underset{x\in (0,1)}{\max}\psi(x) $, thus 
\begin{align*} 
\int\!\!\!\!\!\int_{Q} |(-\mathcal{M})^k\mathcal{K}_{*} z|^2 e^{-2s\theta M_0}dxdt
&\leqslant C\sum_{\ell=1}^{m}\int\!\!\!\!\!\int_{Q} |(B^*((-\mathcal{M})^k\partial_t^j z))_{\ell}|^2 e^{2s\varphi}dxdt\\
&\leqslant C\sum_{\ell=1}^{m} C(k,j)\int\!\!\!\!\!\int_{\omega\times(0,T)}  |(B^* z)_{\ell}|^2 e^{2s\Phi}dxdt\\
&\leqslant C \int\!\!\!\!\!\int_{\omega\times(0,T)}  |B^* z|^2 e^{2s\Phi}dxdt \numberthis\label{equ:4.3} 
\end{align*}

From \eqref{equ:4.2}  and \eqref{equ:4.3}, we will easily deduce \eqref{observ:inequality} and, therefore, the null controllability of \eqref{syst24}.
 
\end{proof}
\section{Null controllability for semilinear systems}

Now we consider the following  semi-linear non-diagonalizable parabolic degenerate systems.
\begin{equation}\label{semilinsyst}
\left\lbrace \begin{array}{lll}
\partial_t Y=\mathbf{D}\mathcal{M}Y + F(Y)+B v \mathbbm{1}_{\omega}& in & Q, \\
\mathbf{C} Y=0   & on & \Sigma, \\
Y(0,x)=Y_0(x)\,\,  & in & (0,1),
\end{array}
\right.
\end{equation}
 where $F$ is a globally Lipschitz function depending only on $Y$ and $F(0)=0$. Our goal is to prove the null controllability of the system \eqref{semilinsyst}.

 We will use a standard strategy, as in  \cite{zua,A5,CanFrag,bahm}, which consists in using the linearization
 technique, the approximate null controllability, the variational approach and the Schauder
 fixed point theorem.


The system \eqref{semilinsyst} can be written as follow  

\begin{equation*}
\left\lbrace \begin{array}{lll}
\partial_t Y=(\mathbf{D}\mathcal{M}   +A_{Y}) Y + B v \mathbbm{1}_{\omega}& in & Q, \\
\mathbf{C} Y=0   & on & \Sigma, \\
Y(0,x)=Y_0(x)\,\,  & in & (0,1),
\end{array}
\right.
\end{equation*}
		 
where $A_{Y}$ is the matrix defined by 
\begin{equation*}
	 a_{i,j}^{Y}=\int_{0}^{1}\partial_j F_i(\tau Y)d\tau
\end{equation*}

We assume the following 
		
	\begin{equation}\label{condionalgsemi}
	\begin{cases}
	 F\in \mathcal{C}^1(\R^n)\\
	\mathrm{rank} [\lambda_i\mathbf{D}-A_{Y}|B]=n\:\:\forall i\geqslant 1\,\text{ For all } Y\in L^2(0,1)^n \,
	\end{cases}	
	\end{equation}

 Let  us recall  the set $X_T$ (see Theorem \ref{theo2.2}) induced with the norm 
 \begin{equation*}
 \|Y\|_{X_T}^2=\underset{t\in[0,T]}{\sup}\|Y(t)\|_{L^2(0,1)^n}^2 +\int_0^T\|\sqrt{a}Y(t)\|_{L^2(0,1)^n}^2 dt
 \end{equation*}

For a fixed $\widetilde{Y}$ in $X_T$, consider the associated linear system

\begin{equation}\label{linearized}
\left\lbrace \begin{array}{lll}
\partial_t Y=(\mathbf{D}\mathcal{M}   +A_{\widetilde{Y}}) Y + B v \mathbbm{1}_{\omega}& in & Q, \\
\mathbf{C} Y=0   & on & \Sigma, \\
Y(0,x)=Y_0(x)\,\,  & in & (0,1),
\end{array}
\right.
\end{equation}

and its adjoint system

	\begin{equation}\label{adjointlinearized}
	\left\lbrace \begin{array}{lll}
	\partial_t Z=(\mathbf{D}^*\mathcal{M}   +A_{\widetilde{Y}}^*) Z & in & Q, \\
	\mathbf{C} Z=0   & on & \Sigma, \\
	Z(0,x)=Z_0(x)\,\,  & in & (0,1),
	\end{array}
	\right.
	\end{equation}

Thus, from \eqref{condionalgsemi}, it follows that the matrix $A_{\widetilde{Y}}$ satisfies the algebraic condition \eqref{condiequivalente}.



 In order to construct a suitable fixed point operator, we start at first  by proving the uniqueness of the control with minimal norm. For a given  $\varepsilon>0$ and $Y_0\in L^2(0,A)^n$ we consider the following functional

\begin{equation}
	J_{\varepsilon,\widetilde{Y}}(v)=\frac12\int_{0}^{T}\| v\|_{L^2(0,1)^m}^2 dt+\frac{1}{2\varepsilon}\|Y(T)\|_{L^2(0,1)^n}^2
\end{equation}

\begin{equation}\label{equ:6.5}
J^*_{\varepsilon,\widetilde{Y}}(Z_0)=\frac12\iint_{Q_{\omega}}|B^* Z |^2 +\frac{\varepsilon}{2}\|Z_0\|_{L^2(0,1)^n}^2+\int_{0}^{1}\big(\sum_{i=1}^{n}Z_i(T)Y_{i0}\big)dx
\end{equation}
where $Y$ is the solution of \eqref{linearized} with initial data $Y_0$ and $Z$ is the solution of \eqref{adjointlinearized} with initial data $Z_0$. By a classical arguments, minimization problems 
\begin{equation*}
	\min\{ J_{\varepsilon,\widetilde{Y}}(v), Bv \in L^2(Q)^n  \} \,\, \text{ and } \,\, \min\{ J^*_{\varepsilon,\widetilde{Y}}(Z_0), Z_0 \in L^2(0,1)^n  \}  
\end{equation*}
admit unique solutions $v^{\varepsilon,\widetilde{Y}}$   and  $Z_0^{\varepsilon,\widetilde{Y}}$  such that 
 \begin{equation}\label{equ:6.6}
 	 v=B^*Z \mathbbm{1}_{\omega}
 \end{equation}
 
  \begin{equation}\label{equ:6.7}
  Z_0^{\varepsilon,\widetilde{Y}}=-\frac{1}{\varepsilon} Y^{\varepsilon,\widetilde{Y}}(T)
   \end{equation}
 where $Y^{\varepsilon,\widetilde{Y}}$ is the solution of \eqref{linearized}  associated to the control $v$ and $ Z^{\varepsilon,\widetilde{Y}}$ is the solution of the adjoint problem \eqref{adjointlinearized} with the initial data $ Z_0^{\varepsilon,\widetilde{Y}}$.  Since $J^*_{\varepsilon,\widetilde{Y}}(Z_0^{\varepsilon,\widetilde{Y}})\leqslant 0$ ,then from \eqref{equ:6.5}  and \eqref{equ:6.7} we infer 
 
  \begin{equation}\label{equ:6.8}
  \frac12\int\!\!\!\!\!\int_{Q_{\omega}}|B^* Z |^2
  +\frac{1}{2\varepsilon}\|Y^{\varepsilon,\widetilde{Y}}(T)\|_{L^2(0,1)^n}^2 \leqslant \|Z(T,\cdot) \|_{L^2(0,1)^n} \|Y(0,\cdot) \|_{L^2(0,1)^n} 
 \end{equation}
 
On the other hand, by the observability inequality \eqref{observ:inequality}

\begin{equation}\label{equ:6.9}
\|Z(T,\cdot) \|^2_{L^2(0,1)^n}\leqslant C \int\!\!\!\!\!\int_{\omega\times(0,T)} |B^*Z(t,x)|^2.
\end{equation} 

From the estimates \eqref{equ:6.6} \eqref{equ:6.8} and \eqref{equ:6.9}  we infer 
 
 \begin{equation}\label{equ:6.10}
  \frac12\int_{0}^{T}\| v^{\varepsilon,\widetilde{Y}} \|_{L^2(0,1)^m}^2 dt
 +\frac{1}{2\varepsilon}\|Y^{\varepsilon,\widetilde{Y}}(T)\|_{L^2(0,1)^n}^2 \leqslant C \|Y(0,\cdot) \|_{L^2(0,1)^n} 
 \end{equation}

 The uniqueness of the control $v^{\varepsilon,\widetilde{Y}}$  allows to define the operator
 \begin{align*}
 	 K_{\varepsilon}\,\,:\,\,& X_T \to X_T\\
 	                    & \,\, \widetilde{Y} \mapsto Y^{\varepsilon,\widetilde{Y}}.\numberthis\label{equa:6.11}
 \end{align*}
 Any fixed point $Y^{\varepsilon}$ of $K_{\varepsilon}$ is a solution of the semilinear system
\eqref{semilinsyst}  associated to $v^{\varepsilon,Y^{\varepsilon}}$  and it satisfies 
\begin{equation}
	 \|Y^{\varepsilon}\|^2_{(L^2(0,1))^n}\leqslant \varepsilon C.
\end{equation}
Indeed, suppose first that $Y_0 \in (H^1_a)^n$. From \eqref{therem22equ:2} and since the matrix $A$ is constant, we have the following estimate

\begin{multline*}
\sup_{t\in[0,T]}\|Y^{\varepsilon,\widetilde{Y}}(t)\|^2_{\left(H^1_{a}\right)^n} + \int_0^T\left( \| \partial_t Y^{\varepsilon,\widetilde{Y}}\|^2_{L^2}+\|\mathcal{M} Y^{\varepsilon,\widetilde{Y}}\|^2_{L^2} \right)dt\\
\leq C_T\left(\| Y_0^{\varepsilon,\widetilde{Y}}  \|^2_{\left( H^1_{a}\right)^n}+ \|v^{\varepsilon,\widetilde{Y}}\|^2_{(L^2(Q))^m}
\right)\numberthis
\end{multline*}  

Thus, by \eqref{equ:6.10} we infer 
\begin{align}
	\|Y^{\varepsilon,\widetilde{Y}} \|_{X_T} &\leqslant C \|Y_0 \|_{(H_a^1)^n},\label{equ:6.14} \\
	\|Y^{\varepsilon,\widetilde{Y}} \|_{Y_T} &\leqslant C \|Y_0 \|_{(H_a^1)^n}\label{equ:6.15}
\end{align}
where $Y_T:=H^1 \left(0,T;\left(L^2(0,1)\right)^n\right)\cap L^2\left(0,T;{\left( H_{a}^1\right)}^n \right)$  with the norm 
\begin{equation*}
\|Y \|_{Y_T}=\int_0^T\left(\|Y(t)\|^2_{(H^1_{a})^n} +  \| \partial_t Y \|^2_{L^2}+\|\mathcal{M} Y\|^2_{L^2} \right)dt.
\end{equation*}  

Thus, the range of $K_{\varepsilon}$ is include in the ball  $B(0,R)$ of $X_T$ with the radius 
$R= C \|Y_0 \|_{(H_a^1)^n}$ where $C$ is the constant used in \eqref{equ:6.14}. 
Then $K_{\varepsilon}(B(0,R))\subset B(0,R)$. Now let us prove that the operator $K_{\varepsilon}$ is continuous and compact. 
The compactness of $K_{\varepsilon}$ results from the compactness of the embedding 
\begin{equation}\label{equa:6.16}
Y_T\hookrightarrow  X_T 
\end{equation}
see \cite[Theorem 4.4]{CanFrag}.   For  the continuity, let us consider the sequence $\widetilde{Y}_n$ that converge to $\overline{\widetilde{Y}}$ in $ X_T$. To simplify, let denote $Y^{\varepsilon,\widetilde{Y}_n}$ and $v^{\varepsilon,\widetilde{Y}_n}$ respectively by $Y_n$
 and  $v_n$ (for a fixed $\varepsilon$). From \eqref{equ:6.15} we deduce that the sequence  $Y_n$is bounded in the space $ Y_T$. Thus we can extract a subsequence that converges weakly in $ Y_T$ to $\overline{Y}$ and strongly in $ X_T$ by dint of \eqref{equa:6.16}. Likewise, thanks to \eqref{equ:6.10} we can assume that $v_n$ converges weakly to $\overline{v}$. So $\overline{Y}$ is then the solution of \eqref{linearized} associated to $\overline{\widetilde{Y}}$ and $\overline{v}$. Therefore, in order to show that $K_{\varepsilon}(\overline{\widetilde{Y}})=\overline{Y}$ it suffices to prove that $\overline{v}=v^{\varepsilon,\overline{Y} }$. 
 From the definition of $v_n$, we have for all $v$ in $L^2(Q)^m$
 \begin{equation}\label{equ:6.17}
 \frac12\int_{0}^{T}\| v_n\|_{L^2(0,1)^m}^2 dt+\frac{1}{2\varepsilon}\|Y_n(T)\|_{L^2(0,1)^n}^2  \leqslant \frac12\int_{0}^{T}\| v\|_{L^2(0,1)^m}^2 dt+\frac{1}{2\varepsilon}\|Y^{\widetilde{Y}_n,v}(T)\|_{L^2(0,1)^n}^2,
 \end{equation}
where $Y^{\widetilde{Y}_n,v}$ is the solution of \eqref{linearized} associated to $\widetilde{Y}_n$ and $v$. Passing to the limit in the inequality \eqref{equ:6.17}, one has for all $v$ in $L^2(Q)^m$
\begin{equation}\label{equ:6.18}
\frac12\int_{0}^{T}\| \overline{v} \|_{L^2(0,1)^m}^2 dt+\frac{1}{2\varepsilon}\|\overline{Y}(T)\|_{L^2(0,1)^n}^2  \leqslant \frac12\int_{0}^{T}\| v\|_{L^2(0,1)^m}^2 dt+\frac{1}{2\varepsilon}\|Y^{\overline{\widetilde{Y}},v}(T)\|_{L^2(0,1)^n}^2.
\end{equation}
This means that $\overline{v}$ minimizes $J_{\varepsilon,\overline{\widetilde{Y}}}$. Consequently $K_{\varepsilon}(\overline{\widetilde{Y}})=\overline{Y}$, Hence the continuity of $K_{\varepsilon}$,
Thus, the following result is then proved.

\begin{theorem}\label{theo6.1}
	Assume \eqref{condionalgsemi} is fulfilled. For all $Y_0$ in $(H^1_a)^n$ the  semi-linear parabolic degenerate system  \eqref{semilinsyst} is approximatively null controllable. i.e. for all $\varepsilon>0$ there exists a control $v_{\varepsilon} \in L^2(Q)^m$ for which the associated solution $Y^{v_{\varepsilon}}$ satisfies 
	\begin{equation}\label{theo1equ:6.19}
	\|Y^{v_{\varepsilon}}(T)\|_{L^2(0,1)^n}\leqslant \varepsilon.
	\end{equation}
	Moreover, there exists a positive constant $C>0$ such that 
	\begin{equation*}
	\|v_{\varepsilon}\|^2_{L^2(Q)^m} \leqslant C \|Y_0\|^2_{L^2(0,1)^n}
	\end{equation*}
\end{theorem}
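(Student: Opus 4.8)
The plan is to apply Schauder's fixed point theorem to the map $K_{\varepsilon}$ defined in \eqref{equa:6.11}, all of whose hypotheses have been assembled in the construction preceding the statement. First I would recall the three facts established above: (i) by the regularity estimate \eqref{therem22equ:2} of Theorem \ref{theo2.2} together with the uniform bound \eqref{equ:6.10}, $K_{\varepsilon}$ maps the closed ball $B(0,R)\subset X_T$ of radius $R=C\|Y_0\|_{(H^1_a)^n}$ into itself, cf. \eqref{equ:6.14}; (ii) $K_{\varepsilon}$ is compact, since by \eqref{equ:6.15} its range is bounded in $Y_T$ and the embedding $Y_T\hookrightarrow X_T$ is compact, see \eqref{equa:6.16}; and (iii) $K_{\varepsilon}$ is continuous, by the weak/strong convergence and lower semicontinuity argument carried out just before the statement. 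Since $B(0,R)$ is a nonempty closed convex bounded subset of the Banach space $X_T$, Schauder's theorem furnishes a fixed point $Y^{\varepsilon}\in B(0,R)$ with $K_{\varepsilon}(Y^{\varepsilon})=Y^{\varepsilon}$.

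Next I would interpret this fixed point. By the definition of $K_{\varepsilon}$, the function $Y^{\varepsilon}=Y^{\varepsilon,Y^{\varepsilon}}$ solves the linearized system \eqref{linearized} with $\widetilde{Y}=Y^{\varepsilon}$ driven by the optimal control $v_{\varepsilon}:=v^{\varepsilon,Y^{\varepsilon}}$; since $A_{Y^{\varepsilon}}$ is exactly the matrix appearing in the reformulation of \eqref{semilinsyst}, $Y^{\varepsilon}$ is in fact a solution of the semilinear system \eqref{semilinsyst} associated to $v_{\varepsilon}$. Evaluating the uniform estimate \eqref{equ:6.10} at the fixed point gives simultaneously the control bound $\int_0^T\|v_{\varepsilon}\|_{L^2(0,1)^m}^2\,dt\leq C\|Y_0\|^2_{L^2(0,1)^n}$ and the terminal bound $\|Y^{\varepsilon}(T)\|^2_{L^2(0,1)^n}\leq 2C\varepsilon\|Y_0\|^2_{L^2(0,1)^n}$. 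The first is the asserted control estimate; the second yields $\|Y^{\varepsilon}(T)\|_{L^2(0,1)^n}\leq\sqrt{2C\varepsilon}\,\|Y_0\|_{L^2(0,1)^n}$, and a harmless relabeling of the penalization parameter (replacing $\varepsilon$ by $\varepsilon^2/(2C\|Y_0\|^2)$) produces \eqref{theo1equ:6.19}.

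The step I expect to be the main obstacle is guaranteeing that all the constants above, in particular the observability constant $C$ in \eqref{equ:6.9} and hence the radius $R$, can be chosen \emph{uniformly} in $\widetilde{Y}$; this is what makes $B(0,R)$ a genuine invariant set rather than one whose radius drifts with $\widetilde{Y}$. The key point is that $F$ is globally Lipschitz, so the entries $a_{i,j}^{\widetilde{Y}}=\int_0^1\partial_jF_i(\tau\widetilde{Y})\,d\tau$ are bounded by $\mathrm{Lip}(F)$ independently of $\widetilde{Y}$; the family $\{A_{\widetilde{Y}}\}$ therefore ranges over a fixed bounded set of matrices, over which the Carleman estimates behind Proposition \ref{propo3.4}, and thus the observability inequality \eqref{observ:inequality} established in the proof of Theorem \ref{main result} under the uniform rank hypothesis \eqref{condionalgsemi}, hold with one common constant. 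Granting this uniformity, the self-map property \eqref{equ:6.14}, the compactness, and the continuity all close up, and the fixed point argument goes through.
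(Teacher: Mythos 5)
Your proposal follows essentially the same route as the paper: the construction preceding the statement \emph{is} the paper's proof, assembling the self-map property, compactness and continuity of $K_{\varepsilon}$ and concluding by Schauder's fixed point theorem, with \eqref{equ:6.10} evaluated at the fixed point giving both the control bound and the terminal estimate. Your remark that the observability constant must be uniform in $\widetilde{Y}$ is well taken --- the paper invokes \eqref{observ:inequality}, which was proved for a constant coupling matrix, without addressing this point --- so your write-up is, if anything, more careful there.
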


From this theorem, we deduce following result 

\begin{theorem}\label{theo6.2}
		Assume \eqref{condionalgsemi} is fulfilled. For all $Y_0$ in $(H^1_a)^n$ the  semi-linear parabolic degenerate system \eqref{semilinsyst} is null controllable. i.e. There exists a control $v$ in $ L^2(Q)^m$ for which the associated solution $Y^{v}$ satisfies  $$Y^{v}(T,x)=0,\qquad \forall x \in (0,1)$$
	Moreover, there exists a positive constant $C>0$ such that 
	\begin{equation*}
	\|v\|^2_{L^2(Q)^m} \leqslant C \|Y_0\|^2_{L^2(0,1)^n}
	\end{equation*}
\end{theorem}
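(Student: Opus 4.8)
The plan is to deduce Theorem \ref{theo6.2} from the approximate controllability result Theorem \ref{theo6.1} by a compactness/passage-to-the-limit argument, exploiting the uniform control bound that Theorem \ref{theo6.1} provides. The key observation is that the bound $\|v_\varepsilon\|^2_{L^2(Q)^m}\leqslant C\|Y_0\|^2_{L^2(0,1)^n}$ is \emph{independent of $\varepsilon$}, so the family $\{v_\varepsilon\}_{\varepsilon>0}$ of approximate controls is bounded in the reflexive Hilbert space $L^2(Q)^m$. First I would extract, as $\varepsilon\to 0$, a weakly convergent subsequence $v_\varepsilon\rightharpoonup v$ in $L^2(Q)^m$, with $\|v\|^2_{L^2(Q)^m}\leqslant C\|Y_0\|^2_{L^2(0,1)^n}$ by weak lower semicontinuity of the norm.

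Next I would pass to the limit in the state equation. Let $Y^{v_\varepsilon}$ denote the solution of \eqref{semilinsyst} associated to $v_\varepsilon$ and initial datum $Y_0\in (H^1_a)^n$. Using the well-posedness estimate \eqref{therem22equ:2} together with the uniform control bound, the trajectories $Y^{v_\varepsilon}$ are bounded in $Y_T$ uniformly in $\varepsilon$; by the compact embedding \eqref{equa:6.16}, $Y_T\hookrightarrow X_T$, a subsequence converges strongly in $X_T$ and weakly in $Y_T$ to some limit $Y$. The strong $X_T$-convergence is exactly what is needed to handle the nonlinear term: since $F$ is globally Lipschitz with $F(0)=0$, one has $\|F(Y^{v_\varepsilon})-F(Y)\|\leqslant \mathrm{Lip}(F)\,\|Y^{v_\varepsilon}-Y\|\to 0$, so $F(Y^{v_\varepsilon})\to F(Y)$ strongly, while the linear terms pass to the limit using the weak $Y_T$-convergence. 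Hence $Y$ solves \eqref{semilinsyst} with control $v$.

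It remains to identify the final state. From \eqref{theo1equ:6.19} we have $\|Y^{v_\varepsilon}(T)\|_{L^2(0,1)^n}\leqslant\varepsilon$. The continuity of the trace map $Y\mapsto Y(T)$ on $X_T$ (equivalently, the $C([0,T];(L^2(0,1))^n)$-component of the $X_T$-convergence) gives $Y^{v_\varepsilon}(T)\to Y(T)$ in $(L^2(0,1))^n$, whence
\begin{equation*}
\|Y(T)\|_{L^2(0,1)^n}=\lim_{\varepsilon\to 0}\|Y^{v_\varepsilon}(T)\|_{L^2(0,1)^n}\leqslant\lim_{\varepsilon\to 0}\varepsilon=0,
\end{equation*}
so $Y(T,x)=0$ for all $x\in(0,1)$. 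This establishes exact null controllability with the control $v$ satisfying the asserted bound. Finally, a density argument removes the extra regularity assumption on the datum: the estimate $\|v\|^2_{L^2(Q)^m}\leqslant C\|Y_0\|^2_{L^2(0,1)^n}$ involves only the $(L^2(0,1))^n$-norm of $Y_0$, so for general $Y_0\in(L^2(0,1))^n$ one approximates by $Y_0^k\in(H^1_a)^n$, obtains uniformly bounded controls $v^k$, and passes to the limit once more.

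The main obstacle I anticipate is the passage to the limit in the nonlinearity, which is precisely why the \emph{strong} convergence of $Y^{v_\varepsilon}$ in $X_T$ (rather than merely weak convergence in $Y_T$) is essential; this is supplied by the compact embedding \eqref{equa:6.16}. A secondary technical point is justifying that the weak limit $Y$ genuinely solves the semilinear system and attains the initial datum $Y_0$, which requires care in combining the weak $Y_T$-convergence of the linear part with the strong convergence of the nonlinear part, but this is standard once the compactness is in hand.
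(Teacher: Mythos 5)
Your argument is essentially the paper's own proof: extract a weakly convergent subsequence of the uniformly bounded approximate controls, use the uniform $Y_T$-bound and the compact embedding $Y_T\hookrightarrow X_T$ to get strong convergence of the states, pass to the limit in the nonlinearity via the Lipschitz property of $F$, and conclude $Y(T)=0$ from \eqref{theo1equ:6.19}; you merely supply the justification for the strong convergence that the paper states without detail. One caveat: your closing density argument for general $Y_0\in(L^2(0,1))^n$ is not needed for this statement (which assumes $Y_0\in(H^1_a)^n$) and does not work as written, since the uniform $Y_T$-bound \eqref{therem22equ:2} — and hence the compactness you rely on — is controlled by $\|Y_0^k\|_{(H^1_a)^n}$, which is not bounded along an approximating sequence; the paper instead treats general $L^2$ data in a separate theorem by letting the free semilinear flow run on $(0,t_0)$ to regularize the datum before controlling on $(t_0,T)$.
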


\begin{proof}
From Theorem \ref{theo6.1} the set $\{v_{\varepsilon}, \varepsilon>0 \}$ is bounded in $ L^2(Q)^m$, thus it contains a sequence $(v_{\varepsilon}^n)$ that converges (weakly) in $ L^2(Q)^m$ to a limit $v_0$ that satisfies 
	\begin{equation*}
\|v_0\|^2_{L^2(Q)^m} \leqslant C \|Y_0\|^2_{L^2(0,1)^n}
\end{equation*}
The sequence 
$(Y^{v_{\varepsilon}^n})$ converges strongly to $(Y^{v_0})$ in $X_T$. Moreover $(Y^{v_0})$ is the solution to  \eqref{semilinsyst} with $v=v_0$.  So according to \eqref{theo1equ:6.19}, for all $x\in (0,1)$ we have 
$$ Y^{v_0}(T,x)=0$$
thus,  the  semi-linear parabolic degenerate system \eqref{semilinsyst} with regular initial data is null controllable.
\end{proof}

Now we are able to give the proof of the null controllability of  the  semi-linear parabolic degenerate system \eqref{semilinsyst} with  general initial data.
 As in \cite{CanFrag,BOU,bahm}, we can show also the following well posedness of degenerate parabolic semi-linear systems which is of great utility. 

\begin{proposition}\label{propo6.3}
	 For all $Y_0$ in $L^2(0,1)^n$ the semi-linear system 
	 \begin{equation}\label{semilinsyst0}
	 \left\lbrace \begin{array}{lll}
	 \partial_t U=\mathbf{D}\mathcal{M}U + F(U)& in & Q, \\
	 \mathbf{C} U=0   & on & \Sigma, \\
	 U(0,x)=Y_0(x)\,\,  & in & (0,1),
	 \end{array}
	 \right.
	 \end{equation}
	 admits a solution $U$ in $X_T$.
\end{proposition}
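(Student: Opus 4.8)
The plan is to construct $U$ as a fixed point of the Duhamel (variation-of-constants) map attached to the analytic semigroup $\{e^{-\mathcal{A}t}\}_{t\ge 0}$ generated by $-\mathcal{A}=\mathbf{D}\mathcal{M}$ on $\mathbb{H}:=(L^2(0,1))^n$, whose existence was established in Section 2, and then to promote the resulting mild solution to a genuine $X_T$-solution via the linear energy estimate of Theorem \ref{theo2.2}. First I would record that, since $F\in\mathcal{C}^1(\R^n)$ is globally Lipschitz with $F(0)=0$, the induced Nemytskii operator $U\mapsto F(U)$ maps $\mathbb{H}$ into $\mathbb{H}$ and satisfies $\|F(U)-F(V)\|_{\mathbb{H}}\le L\|U-V\|_{\mathbb{H}}$ with $L$ the (uniform) Lipschitz constant of $F$; in particular $F(U)\in L^2(Q)^n$ whenever $U\in C([0,T];\mathbb{H})$.

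Next I would define $\Lambda:C([0,T];\mathbb{H})\to C([0,T];\mathbb{H})$ by
$$\Lambda U(t)=e^{-\mathcal{A}t}Y_0+\int_0^t e^{-\mathcal{A}(t-s)}F(U(s))\,ds,$$
which is well defined because $\{e^{-\mathcal{A}t}\}$ is in particular a $C_0$-semigroup, so $\|e^{-\mathcal{A}t}\|_{\mathcal{L}(\mathbb{H})}\le Me^{\omega t}$ on $[0,T]$. Using the Lipschitz bound on $F$ one gets $\|\Lambda U(t)-\Lambda V(t)\|_{\mathbb{H}}\le ML\,e^{|\omega|T}\int_0^t\|U(s)-V(s)\|_{\mathbb{H}}\,ds$, and iterating this Volterra-type inequality shows that a power $\Lambda^k$ is a strict contraction (equivalently, one may contract in a single step after replacing the sup-norm by a Bielecki norm $\sup_{t}e^{-\gamma t}\|\cdot\|_{\mathbb{H}}$ with $\gamma$ large). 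The fixed-point theorem then yields a unique $U\in C([0,T];\mathbb{H})$ solving the integral equation; this is the mild solution of \eqref{semilinsyst0}, and because the Lipschitz constant of $F$ is global and independent of the size of $U$, the solution exists on the whole interval $[0,T]$ without any blow-up restriction.

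Finally, to see that $U\in X_T$ I would freeze the nonlinearity: setting $f:=F(U)\in L^2(Q)^n$, the function $U$ is the mild, hence weak, solution of the linear inhomogeneous degenerate problem $\partial_t U-\mathbf{D}\mathcal{M}U=f$, $\mathbf{C}U=0$, $U(0)=Y_0$. Applying the linear well-posedness of Theorem \ref{theo2.2} with the source $Bv\mathbbm{1}_{\omega}$ replaced by $f$ gives $U\in C([0,T];\mathbb{H})\cap L^2(0,T;(H^1_a)^n)=X_T$ together with the corresponding energy estimate, which completes the argument.

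\textbf{Main obstacle.} The genuinely analytic input --- that the non-diagonalizable, degenerate operator $\mathbf{D}\mathcal{M}$ generates an analytic (a fortiori $C_0$) semigroup on $\mathbb{H}$ and that the associated linear inhomogeneous problem enjoys the $X_T$ energy estimate --- is exactly what was secured in Section 2 (sectoriality of $\mathcal{A}$ and Theorem \ref{theo2.2}). Consequently the only real point requiring care is global-in-time existence, which is delivered for free by the global Lipschitz hypothesis on $F$; the remaining steps are the routine contraction-mapping and linear-regularity bootstraps sketched above.
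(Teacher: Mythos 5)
Your proof is correct. In fact the paper contains no argument at all for Proposition \ref{propo6.3}: the authors simply write that ``as in \cite{CanFrag,BOU,bahm}, we can show also the following well posedness'' and state the proposition, so your proposal supplies a proof that the paper delegates entirely to the literature. Your route --- the mild (Duhamel) formulation built on the analytic semigroup generated by $\mathbf{D}\mathcal{M}$ from Section 2, a contraction (or Bielecki-norm) fixed point in $C([0,T];\mathbb{H})$ exploiting the global Lipschitz bound and $F(0)=0$, and then freezing the nonlinearity as $f:=F(U)\in L^2(Q)^n$ and invoking the linear theory to conclude $U\in X_T$ --- is precisely the standard scheme used in those cited references, so there is no methodological divergence, only the virtue of being self-contained. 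One point deserves an explicit sentence rather than the implicit treatment you give it: Theorem \ref{theo2.2} is stated for sources of the special form $Bv\mathbbm{1}_{\omega}$ with $v\in (L^2(Q))^m$, whereas your bootstrap needs the same conclusion (membership in $X_T$ plus the energy estimate) for an arbitrary inhomogeneity $f\in L^2(Q)^n$; this is harmless, since nothing in the derivation of that estimate uses the structure of $Bv\mathbbm{1}_{\omega}$ beyond its being an $L^2$ function, but as written the theorem does not literally cover your $f$, so you should state the extension before applying it.
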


\begin{theorem}
	For all $Y_0$ in $L^2(0,1)^n$ the semi-linear system parabolic degenerate system \eqref{semilinsyst}is null controllable.
\end{theorem}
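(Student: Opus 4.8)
The plan is to bootstrap from the null controllability with regular data already established in Theorem \ref{theo6.2}, using the regularizing (smoothing) effect of the degenerate parabolic flow to upgrade an arbitrary $L^2$ datum to an $(H^1_a)^n$ datum after an arbitrarily short time. Fix an intermediate time $t_0\in(0,T)$, say $t_0=T/2$. First I would let the system evolve freely, i.e.\ with no control on $(0,t_0)$: by Proposition \ref{propo6.3} the uncontrolled semilinear problem \eqref{semilinsyst0} with initial datum $Y_0\in L^2(0,1)^n$ has a solution $U\in X_T$, and I restrict it to $(0,t_0)$.

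The key step is a smoothing estimate asserting that $U(t_0)\in (H^1_a)^n$, with
\[
\|U(t_0)\|_{(H^1_a)^n}\le C\,\|Y_0\|_{L^2(0,1)^n}.
\]
This follows from the analyticity of the semigroup $\{e^{-\mathcal{A}t}\}$ generated by $-\mathcal{A}=\mathbf{D}\mathcal{M}$ (Section 2). Writing $U$ via the Duhamel formula
\[
U(t)=e^{-\mathcal{A}t}Y_0+\int_0^t e^{-\mathcal{A}(t-s)}F(U(s))\,ds,
\]
and combining the analytic bounds $\|e^{-\mathcal{A}t}\|_{\mathcal{L}(L^2)}\le C$ and $\|e^{-\mathcal{A}t}\|_{\mathcal{L}((L^2)^n,(H^1_a)^n)}\le C\,t^{-1/2}$ with $F(0)=0$ and the global Lipschitz bound $|F(U)|\le L|U|$, one estimates the $(H^1_a)^n$-norm of $U(t_0)$ in terms of $\|Y_0\|_{L^2}$ (the singularity $t^{-1/2}$ being integrable, and the nonlinear contribution being absorbed by Gronwall's inequality).

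Next, on the interval $(t_0,T)$ I would apply Theorem \ref{theo6.2} to \eqref{semilinsyst} with the now-regular initial datum $U(t_0)\in (H^1_a)^n$ and control horizon $T-t_0$; since the system is autonomous, this is legitimate. This produces a control $\tilde v\in L^2((t_0,T)\times(0,1))^m$ driving the associated solution to $0$ at time $T$, with $\|\tilde v\|^2_{L^2}\le C\,\|U(t_0)\|^2_{(H^1_a)^n}$. Defining $v:=0$ on $(0,t_0)$ and $v:=\tilde v$ on $(t_0,T)$, the corresponding solution of \eqref{semilinsyst} coincides with $U$ on $(0,t_0)$ and with the steered trajectory on $(t_0,T)$, whence $Y^v(T)=0$; chaining the two cost bounds yields $\|v\|^2_{L^2(Q)^m}\le C\,\|Y_0\|^2_{L^2(0,1)^n}$.

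The main obstacle is the smoothing estimate itself: one must verify that the degenerate elliptic part $\mathbf{D}\mathcal{M}$ genuinely maps $(L^2)^n$ into $(H^1_a)^n$ with the quantitative $t^{-1/2}$ rate, despite the degeneracy of $a$ at $0$ and the non-diagonalizable coupling through $\mathbf{D}$. This rests on the sectoriality proved in Section 2 together with the identification of the fractional-power/interpolation space between $(L^2)^n$ and $D(\mathcal{A})=(H^2_a)^n$ as $(H^1_a)^n$; the globally Lipschitz nonlinearity $F$ then perturbs this linear estimate only by a term controlled by Gronwall's inequality.
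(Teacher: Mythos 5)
Your proposal is correct and follows essentially the same route as the paper: let the uncontrolled semilinear system (Proposition \ref{propo6.3}) evolve on $(0,t_0)$ to regularize the $L^2$ datum into $(H^1_a)^n$, apply Theorem \ref{theo6.2} on $(t_0,T)$, and glue the trajectory and the control. The only difference is that you spell out the smoothing step via the Duhamel formula and the analyticity of the semigroup, whereas the paper simply asserts $\widetilde{Y}(t_0)\in (H^1_a)^n$; your justification is a welcome addition but does not change the argument.
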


\begin{proof}
	By Proposition 	\ref{propo6.3} the system \eqref{semilinsyst0} in the set $(0,T/2)\times(0,1)$ with initial data $Y_0$ in $L^2(0,1)^n$ admits a solution $\widetilde{Y}$ in $X_{T/2}$. Thus, for $t_0\in (0,T/2)$  we have  $\widetilde{Y}(t_0)\in (H^1_a)^n$. Now let us consider 
	\begin{equation}\label{semilinsyst1}
	\left\lbrace \begin{array}{lll}
	\partial_t \widetilde{U}=\mathbf{D}\mathcal{M}\widetilde{U} + F(\widetilde{U})+B v \mathbbm{1}_{\omega}& in & Q_{t_0}, \\
	\mathbf{C} \widetilde{U}=0   & on & \Sigma, \\
	\widetilde{U}(t_0,x)=\widetilde{Y}(t_0)(x)\,\,  & in & (0,1),
	\end{array}
	\right.
	\end{equation} 
where $Q_{t_0}= (t_0,T)\times(0,1)$. Due to Theorem \ref{theo6.2}, there exists a control $v_1\in L^2(Q_{t_0})^m$ for which the system \eqref{semilinsyst1} admits a solution  $\widetilde{U}$ that satisfies $\widetilde{U}(T,x)=0$ for all $x\in(0,1)$. Now let us define $(Y,v)$ as follow :
$$Y=\begin{cases}
U \text{ in } [0,t_0]\\
\widetilde{U} \text{ in } [t_0,T]
\end{cases} $$
$$v=\begin{cases}
0 \text{ in } [0,t_0]\\
v_1 \text{ in } [t_0,T]
\end{cases} $$
$Y$ is then a solution of the system \eqref{semilinsyst} that satisfies $Y(T,x)=0$ for all $x\in(0,1)$	which completes the proof.	 
\end{proof}

\bibliographystyle{plain}

\end{document}